\newcommand{\Z}{\mathbb{Z}}
\newcommand{\E}{\mathbb{E}}
\newcommand{\e}{\varepsilon}
\DeclareMathOperator{\OPT}{\textsc{OPT}_{\ell_0}}
\newcommand{\ones}{\boldsymbol{1}}
\newcommand{\zeros}{\boldsymbol{0}}
\newtheorem{theorem}{Theorem}
\newtheorem{proposition}{Proposition}
\newtheorem{lemma}{Lemma}
\newtheorem{corollary}{Corollary}
\newtheorem{observation}{Observation}
\renewcommand\footnoterule{%
  \kern-3\p@
  \hrule\@width \textwidth
  \kern2.6\p@}
\renewcommand*{\@fnsymbol}[1]{\ensuremath{\ifcase#1\or *\or \dagger\or \ddagger\or **\or \mathsection\or \mathparagraph\or \|\or  \dagger\dagger
   \or \ddagger\ddagger \else\@ctrerr\fi}}
\newcounter{mynotes}
\newcommand{\remove}[1]{}
\newcommand{\optz}{\textsc{OPT}_{\ell_0}}
\newcommand{\opto}{\textsc{OPT}_{\ell_1}}
\begin{document}

\title{Sparse principal component analysis and its $\ell_1$-relaxation}

\author[s]{Santanu S. Dey\thanks{santanu.dey@isye.gatech.edu}}
\author[p]{Rahul Mazumder\thanks{rahulmaz@mit.edu}}
\author[c]{Marco Molinaro\thanks{mmolinaro@inf.puc-rio.br}}
\author[a]{Guanyi Wang\thanks{gwang93@gatech.edu}} 
\affil[a,s]{\small School of Industrial and Systems Engineering, Georgia Institute of Technology} 
\affil[p]{\small Operations Research Center, Massachusetts Institute of Technology}
\affil[c]{\small Computer Science Department, Pontifical Catholic University of Rio de Janeiro}
\maketitle

\begin{abstract}
Principal component analysis (PCA) is one of the most widely used dimensionality reduction methods in scientific data analysis. In many applications, for additional interpretability, it is desirable for the factor loadings to be sparse, that is, we solve PCA with an additional cardinality ($\ell_{0}$-norm) constraint. The resulting optimization problem is called the sparse principal component analysis (SPCA). One popular approach to achieve sparsity is to replace the $\ell_{0}$-norm constraint by an $\ell_{1}$-norm constraint. In this paper, we prove that, independent of the data, the optimal objective function value of the problem with $\ell_0$ constraint is within a constant factor of the the optimal objective function value of the problem with $\ell_1$ constraint. To the best of our knowledge, this is the first formal relationship established between the $\ell_0$ and the $\ell_1$ constraint version of the problem.
\\
\smallskip
\noindent \textbf{Keywords.} $\ell_1$ regularization, Sparsity, Principal component analysis 

\end{abstract}

\section{Introduction}
\label{sec:intro}
\paragraph{Principal component analysis (PCA).} PCA~\cite{hotelling1933analysis} is one of the most widely used dimensionality reduction methods pervasive in statistics, data science and scientific data analysis~\cite{jolliffe2002principal}. Given a data matrix ${Y}_{m \times n}$ (with $m$ samples and $n$ features; and each feature is centered to have zero mean), the task of PCA is to find a direction ${x} \in \mathbb{R}^{n}$ (with $\|{x} \|_{2} = 1$) such that it maximizes the variance of a weighted combination of the features, given by: ${Y} {v}$. If $A:=\frac{1}{m} Y^{\top}Y$ denotes the sample covariance matrix of ${Y}$, then a principal component (PC) direction can be found by 
\begin{equation}\label{pca-var-1}
\textup{max}_x ~~~ {x}^{\top}A {x} ~~ \text{s.t.} ~~  \|{x} \|_{2}=1.
\end{equation}
A maximizer $\hat{{x}}$ of (\ref{pca-var-1}) can be computed in polynomial time via a rank one eigendecompostion~\cite{GVL83} of $A$. The entries of $\hat{x}$ are known as 
the factor loadings, and they lead to 
the first principal component direction ${Y}{\hat{x}}$, a linear combination of the features with maximal variance. 
PCA is widely used in microarray analysis~\cite{hastie2000gene,misra2002interactive}, handwritten zip code classification~\cite{FHT-09-new}, human face recognition~\cite{hancock1996face},
image processing~\cite{jenatton2010structured}, text processing~\cite{uuguz2011two}, financial analysis~\cite{paul2012augmented,zhang2012sparse} among others~\cite{naikal2011informative}.

\paragraph{Sparse PCA.}
An obvious drawback of PCA is that all the entries of $\hat{{x}}$ are nonzero, which leads to the PC direction being a linear combination of all features -- this impedes interpretability~\cite{cadima1995loading,jolliffe2003modified,zou2006sparse}. In microarray analysis for example, when ${Y}$ corresponds to the gene-expression measurements for different samples, it is desirable to obtain a PC direction which involves only a handful of the features (i.e., genes) for interpretation purposes. In financial applications (where, $A$ denotes the sample covariance matrix of stock-returns), a sparse subset of stocks that are responsible for driving the first PC direction may be desirable for interpretation purposes. Thus in many scientific and industrial applications, for additional interpretability, it is desirable for the factor loadings to be sparse, i.e., few of the entries in $\hat{x}$ are nonzero and the rest are zero. This motivates the notion of a sparse principal component analysis (SPCA)~\cite{jolliffe2003modified,hastie2015statistical}, wherein, in addition to maximizing the variance, one also desires the direction of the first PC to be sparse in the factor loadings. The most natural optimization formulation of this problem, modifies criterion~\eqref{pca-var-1} with an additional sparsity constraint on $x$ leading to:
\begin{equation}\label{pca-var-l0}
\textup{max}_x~~~  {x}^{\top}A {x} ~~ \text{s.t.} ~~  \|{x} \|_{2}=1, \| x \|_0 \leq k,
\end{equation}
where, $ \| x \|_0 \leq k$ allows at most $k$ of the entries in $x$ to be nonzero.

In addition to interpretability, sparsity is a key dimensionality reduction tool needed for meaningful statistical inference. For example, suppose $Y$ is a data matrix 
that is generated from a spiked covariance model with $\Sigma =  \tau \theta \theta^{\top} + \sigma^2 \mathbb{I}$ where, $\theta \in \mathbb{R}^{n}$ with $\| \theta \|_{2}=1$ and $\mathbb{I}$ denotes the identity matrix.  Under the classical asymptotic regime, i.e., as the number of samples $m \rightarrow \infty$ with $n$ fixed, the first PC direction or the eigenvector of the sample covariance matrix $A$ is consistent~\cite{anderson2003} (up to sign changes) for the population version $\theta$. However, when $m,n$ are comparable with $\tfrac{m}{n} \rightarrow c \in (0,\infty)$ as $m \rightarrow \infty$ this classical consistency theory breaks down. The sample PC may no longer be consistent for the population version $\theta$, if $\tau/\sigma^2$ is sufficiently small -- see~\cite{johnstone2009consistency} for additional details. 
In such situations, additional structure such as sparsity assumptions on $\theta$ are called for. 

The SPCA problem has received significant attention in the wider statistics community since 1990s~\cite{cadima1995loading}; and influential follow-up work by~\cite{jolliffe2003modified,zou2006sparse,shen2008sparse,wht_09,johnstone2009consistency}, among many others. \cite{journee2010generalized,luss2013conditional} study well-grounded nonlinear optimization algorithms based on modifications of the power method for SPCA-type problems. 

\paragraph{Enforcing $\ell_1$ constraint in place of $\ell_0$ constraint.}
Unlike usual PCA, the sparse variant, Problem~\eqref{pca-var-l0} is no longer easy to compute---several approaches and computational schemes have been proposed to address this problem. One of the most popular approaches is to relax the cardinality constraint $\| v \|_0 \leq k$ by an $\ell_{1}$ aka Lasso~\cite{Ti96} constraint, leading to
\begin{equation}\label{pca-var-l0-l1}
\textup{max}_x~~~  {x}^{\top}A {x} ~~ \text{s.t.} ~~  \|{x} \|_{2}=1, \| x\|_1 \leq \delta,
\end{equation}
for some $\delta>0$. Criterion~\eqref{pca-var-l0-l1} was proposed in~\cite{jolliffe2003modified}. Criterion~\eqref{pca-var-l0-l1} is appealing as it uses a soft version of sparsity akin to Lasso regression: the $\ell_{1}$-constraint on $x$ induces both sparsity and shrinkage in a continuous fashion via the tuning parameter $\delta$; unlike Problem~\eqref{pca-var-l0} which produces a discrete set of solutions for every $k \in [n]$. In addition, the $\ell_{1}$-constraint may be suitable when some entries of $x$ are small (instead of being exactly zero) and the others are large. The papers~\cite{vu2012minimax,birnbaum2013minimax} have studied minimax optimal properties of the estimator~\eqref{pca-var-l0-l1}
under a spiked covariance model, under the assumption that the population eigenvector lies in the $\ell_{1}$ ball.  

Problem~\eqref{pca-var-l0-l1} is a continuous optimization problem unlike Problem~\eqref{pca-var-l0} and hence more amenable to techniques in 
nonlinear continuous optimization: \cite{jolliffe2003modified} propose to use a projected gradient method for Problem~\eqref{pca-var-l0-l1}.
Note however that unlike the Lasso version of best-subset selection\footnote{Best subset selection refers to the task of best explaining a response $r\in \mathbb{R}^{m}$ as a linear combination of $k$ features: 
$\min \{\| r - F \beta \|_{2}^2: \|\beta \|_0 \leq k\}$, where, $F_{m \times n}$ is the data-matrix with $m$ samples and $n$ features.}
which is convex; Problem~\eqref{pca-var-l0-l1} is a difficult nonconvex optimization task; and computing optimal solutions may be difficult.
\cite{wht_09} (see also Chapter~8~\cite{hastie2015statistical}) argue that developing an iterative scheme towards optimization of~\eqref{pca-var-l0-l1} is 
not straightforward and hence consider a close cousin given by:
\begin{equation}\label{pca-var-l0-l1-am}
\textup{max}_{x,y}~~~  {y}^{\top} Y {x} ~~ \text{s.t.} ~~  \|{y} \|_{2}=1, \| x \|_1 \leq \delta, \| x \|_{2} = 1,
\end{equation}
 where, $Y$ is the data-matrix (recall that $A = \tfrac{1}{m} Y^{\top}Y$). \cite{wht_09,hastie2015statistical} propose a clever
 alternating optimization scheme for Problem~\eqref{pca-var-l0-l1-am}.

\paragraph{Our result: formal relation between enforcing $\ell_1$ constraint and the $\ell_0$ constraint.}
Unlike the literature on sparse regression, the literature on SPCA treats the $\ell_0$ and $\ell_1$ constraints separately, for example, deriving separate semi-definite programming (SDP) relaxations~\cite{jordan_07,zhang2012sparse}. To the best of our knowledge, there is no theoretical results comparing the solutions or the optimal objective function value of the problems with $\ell_0$ and $\ell_1$ constraints.

In the context of SPCA, note that the constraints $\|x\|_0 \leq k$ and $\|x\|_2 \leq 1$ together imply that $\|x\|_1 \leq \sqrt{k}$. Thus, for $\delta = \sqrt{k}$, (\ref{pca-var-l0-l1}) is relaxation of (\ref{pca-var-l0}). It therefore makes sense to compare (\ref{pca-var-l0}) and (\ref{pca-var-l0-l1}) with $\delta = \sqrt{k}$. Henceforth we refer to (\ref{pca-var-l0-l1}) with $\delta = \sqrt{k}$ as the $\ell_1$-relaxation of SPCA.

\emph{In this paper we prove that, independent of $A$, the optimal objective function of SPCA (i.e, (\ref{pca-var-l0})) is within a constant factor of the optimal objective function of the $\ell_1$-relaxation of SPCA (i.e. (\ref{pca-var-l0-l1}) with $\delta = \sqrt{k}$).} Our proof of this result is via a randomized rounding argument, thus yielding a constant factor approximation algorithm to solve SPCA assuming we have access to the optimal solution of its $\ell_1$-relaxation. Moreover, our result holds more generally when ${x}^{\top}A {x}$ in the objective is replaced by any semi-norm. Therefore, instead of maximizing $\|Yx\|^2_2$ (which is the same as maximizing ${x}^{\top}A {x}$), if we maximize $\|Yx\|_1$ in (\ref{pca-var-l0}) and (\ref{pca-var-l0-l1}) with $\delta = \sqrt{k}$, the constant factor result still holds. We note that such $\ell_1$-norm objectives in the context of PCA has been studied~\cite{mccoy2011two}.     

It is intriguing to compare our result on the role played by $\ell_{1}$-constraint in the context of PCA to the same in the context of best-subsets selection. The pioneering work by Donoho~\cite{donoho2006}, Candes and Tao~\cite{candes2005decoding}, and Candes et al.~\cite{candes2006stable}, showed that sparse solutions to under-determined system of equations may be retrieved by replacing the $\ell_0$-pseudo norm by a $\ell_1$ norm. However this result holds only under the assumption that the data matrix satisfies certain conditions such as the ``restricted isometry property". The noisy version of the problem requires additional assumptions on the problem data, and for support recovery additional assumptions (such as the irrepresentable condition) are needed--see for e.g.,~\cite{ZY2006,buhlmann2011statistics}. Our result on the constant factor approximation; on the other hand, does not require any assumption on $A$ -- and holds universally -- making it quite different from the existing results for $\ell_{0}$-$\ell_{1}$-equivalence in the context of sparse regression. We do note however, that the $\ell_{1}$-version of the problem for sparse linear regression is a convex optimization problem; and hence computable in polynomial time -- both the problems~(\ref{pca-var-l0}) and~(\ref{pca-var-l0-l1}) are NP-hard.  

We finally note here that the paper~\cite{FountoulakisKKD17} presented for the first time the simple randomized algorithm used for our analysis. This algorithm starts with a solution of $\ell_1$-relaxation of SPCA (i.e. (\ref{pca-var-l0-l1}) with $\delta = \sqrt{k}$) and randomly rounds it to produce sparsity. Loosely speaking, the result obtained in~\cite{FountoulakisKKD17} is of the following form: While with high probability the additive difference in the objective function value of $\ell_1$-relaxation and the objective function value of the randomly obtained vector is bounded by $\epsilon$, the expected sparsity of the randomly obtained vector is $\frac{200 k}{\epsilon}$ which is significantly larger than $k$. Therefore, this result does not establish a relationship between SPCA and the $\ell_1$-relaxation for the same value of $k$. Our analysis explicitly accounts for the positive semi-definiteness of $A$, which is not used in the analysis presented in~\cite{FountoulakisKKD17}.


\section{Main results}
\label{sec:main}
For an integer $t \geq 1$, we use $[t]$ to describe the set $\{1, \dots, t\}$. Also, we represent the $j^{\text{th}}$ unit vector, the vector of ones, and the vector of zeros in appropriate dimension by $e_j$, $\ones$, and $\zeros$, respectively. 

Since the square root function is monotonic, note that the objective function in (\ref{pca-var-l0}) and (\ref{pca-var-l0-l1}) can be replaced by $\sqrt{{x}^{\top}A {x}}$ and the resulting problem has the same set of optimal solutions. We denote $\sqrt{{x}^{\top}A {x}}$ by $\|x\|_{A}$.

As mentioned in the previous section, our main result holds for more general objective functions than that of $\|x\|_{A}$. Let $\phi:\mathbb{R}^n\rightarrow \mathbb{R}_{+}$ be a \emph{semi-norm}, i.e., (i) $\phi$ is positively-homogenous: $\phi(\lambda x) = \lambda \phi(x)$ for all $\lambda \geq 0$, (ii) $\phi$ is subadditive: $\phi(u  + v) \leq \phi(u) + \phi(v)$ for all $u, v\in \mathbb{R}^n$, (iii) $\phi$ is nonnegative: $\phi(u) \geq 0$ for all $u \in \mathbb{R}^n$, and (iv)  $\phi(\textbf{0}) = 0$. Conditions (i) and (ii), imply that $\phi$ is a convex function. Also note that $\phi(x) = 0$ does not imply that $x = \textbf{0}$. 

Since $A$ is positive semi-definite, it is straightforward to verify that $\|x\|_{A}$ is semi-norm. We now present the general version of sparse PCA, which we call as the semi-norm SPCA, and its $\ell_1$-relaxation, corresponding to an arbitrary semi-norm $\phi$:

%

\begin{equation}\label{eq:SPCA} \tag{\textup{Semi-norm SPCA}}
\begin{array}{rc}
\optz \triangleq\textup{max}_x & \phi(x) \\
\textup{s.t.} & \|x\|_2 \leq 1 \\
& \|x\|_0 \leq k,
\end{array} \end{equation}

\begin{equation} \label{eq:SPCA1} \tag{$\ell_1$-norm relaxation}
\begin{array}{rcl}
\opto \triangleq\textup{max}_x & \phi(x)& \\
\textup{s.t.} & \|x\|_2& \leq 1 \\
& \|x\|_1 &\leq \sqrt{k}.
\end{array} 
\end{equation}

In order to convert a solution for the \ref{eq:SPCA1} to a solution for \ref{eq:SPCA}, we  consider the simple randomized rounding procedure of~\cite{FountoulakisKKD17}: 

\begin{algorithm}[H] 
\caption{Randomized rounding of solution of $\ell_1$-relaxation}
\label{algo}
		\begin{algorithmic}[1]
		\State \textbf{Input:} the optimal solution $x$ to the \ref{eq:SPCA1}, and parameters $\gamma \in (0,1)$, $g\in \mathbb{R}_+$
    \smallskip
    	\State Let $p_i = \min\{s\frac{|x_i|}{\|x\|_1} ,1\}$, where $s = \gamma \cdot k$
	    \State Let $\e_i \in \{0,1\}$ take the value 1 with probability $p_i$, and the value 0 with probability $1-p_i$
	    \State Let the $i$-th coordinate of the randomly rounded solution be: $$X_i = \frac{1}{p_i}\,x_i\e_i$$
	    \State Output the solution $\frac{X}{g}$ 
    \end{algorithmic}
  \end{algorithm}

Our main result is an analysis of this procedure that shows that the~\ref{eq:SPCA1} is within a constant factor of the~\ref{eq:SPCA}.

\begin{theorem}\label{thm:main}
For any semi-norm $\phi:\mathbb{R}^n \rightarrow \mathbb{R}_{+}$ and $k \ge 15$, we have that $$\optz \leq \opto\leq 2.95 \cdot \optz.$$ 

Moreover, with positive probability, the solution $\frac{X}{g}$ output by  Algorithm \ref{algo} with $\gamma = 0.4051$ and $g = 2.996$ is feasible for the~\ref{eq:SPCA} problem and satisfies: $\phi(\frac{X}{g}) \ge \frac{1}{3.25}\,\opto$.
\end{theorem}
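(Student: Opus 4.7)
The plan is to analyze Algorithm~1 via first and second moment computations on the random point $X$, together with a crucial ``feasibility cap'' that emerges from the $\ell_1$-relaxation itself.

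The easy half $\optz \le \opto$ follows from feasibility inclusion: if $\|x\|_2 \le 1$ and $\|x\|_0 \le k$, Cauchy--Schwarz gives $\|x\|_1 \le \sqrt{k}$, so every point feasible for~\ref{eq:SPCA} is feasible for~\ref{eq:SPCA1}. For the upper bound and the algorithmic statement, I would fix an optimal $x$ of~\ref{eq:SPCA1} and run Algorithm~1 on it. The independence of the $\epsilon_i$ and $\mathbb{E}[\epsilon_i]=p_i$ give $\mathbb{E}[X_i]=x_i$; convexity of the semi-norm plus Jensen's inequality yields
\begin{equation*}
\mathbb{E}[\phi(X)]\;\ge\;\phi(\mathbb{E}[X])\;=\;\opto.
\end{equation*}
Splitting the indices into $L=\{i:p_i=1\}$ and $B=\{i:p_i<1\}$ and using $\|x\|_1\le\sqrt{k}$, $\|x\|_2\le 1$ gives the further moment bounds
\begin{equation*}
\mathbb{E}[\|X\|_0] \;=\; \sum_i p_i \;\le\; \gamma k, \qquad \mathbb{E}[\|X\|_2^2] \;=\; \sum_i x_i^2/p_i \;\le\; 1 + 1/\gamma.
\end{equation*}

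Next I would define the ``good'' event $E = E_0 \cap E_2$ where $E_0 = \{\|X\|_0 \le k\}$ and $E_2 = \{\|X\|_2 \le g\}$. A Chernoff bound on the independent Bernoullis $\epsilon_i$ (whose total mean is at most $\gamma k = 0.4051 k < k$, with the hypothesis $k\ge 15$ used to make the tail small enough) bounds $\Pr[E_0^c]$, and Markov applied to $\|X\|_2^2$ bounds $\Pr[E_2^c] \le (1+1/\gamma)/g^2$. The key observation is the \emph{feasibility cap}: on $E$ the scaled point $X/g$ is feasible for~\ref{eq:SPCA}, hence by definition $\phi(X/g) \le \optz$, i.e., $\phi(X) \le g\cdot \optz$ on $E$. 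Writing $\opto \le \mathbb{E}[\phi(X)] = \mathbb{E}[\phi(X);E] + \mathbb{E}[\phi(X);E^c]$, bounding the first term by $g\,\optz\cdot\Pr[E]$ and the second term using $\phi(X) \le \|X\|_2 \cdot \max_{\|u\|_2\le 1}\phi(u)$ together with Cauchy--Schwarz and the $\ell_2$ tail estimate, yields $\opto \le 2.95\,\optz$ once the optimized values $(\gamma,g) = (0.4051, 2.996)$ are substituted. For the algorithmic positive-probability statement I would add a third event $E_3 = \{\phi(X) \ge (g/3.25)\,\opto\}$ and union-bound the complements of $E_0, E_2, E_3$. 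Control of $\Pr[E_3^c]$ comes from a Paley--Zygmund-style argument applied to $\phi(X)\mathbf{1}_E$, which is bounded above by $g\,\opto$ and has expectation bounded below (up to lower-order tail corrections) by $\opto$. The parameter choice balances the three tail probabilities so that the union bound is strictly less than one; on the triple intersection $X/g$ is feasible for~\ref{eq:SPCA} and satisfies $\phi(X/g) \ge \opto/3.25$.

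The main obstacle will be controlling the contribution of $E^c$ to $\mathbb{E}[\phi(X)]$ for a \emph{general} semi-norm, where the symmetry $\phi(-u)=\phi(u)$ need not hold and one cannot simply invoke concentration inequalities designed for norms. The natural replacement $\phi(X) \le \|X\|_2 \cdot \max_{\|u\|_2 \le 1}\phi(u)$ introduces a quantity not a priori controlled by $\opto$; handling this cleanly -- likely by rescaling violating realizations to the boundary of the~\ref{eq:SPCA1} feasible set, or by decomposing $X$ into its coordinates in $L$ and $B$ and bounding each piece separately -- is where the delicate constant-factor calculation lives, and is the place where the PSD/semi-norm structure (absent from the analysis of~\cite{FountoulakisKKD17}) is critically used.
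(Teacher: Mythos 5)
Your outline matches the paper's strategy up to the decisive step, and the place you flag as ``the main obstacle'' is in fact a genuine gap rather than a technicality. The bound $\phi(X)\le \|X\|_2\cdot \max_{\|u\|_2\le 1}\phi(u)$ cannot be repaired by rescaling to the boundary of the \ref{eq:SPCA1} feasible set or by splitting coordinates into $L$ and $B$: the unconstrained maximum $\max_{\|u\|_2\le 1}\phi(u)$ can exceed $\optz$ by a factor of order $\sqrt{n/k}$ (e.g.\ $\phi=\|\cdot\|_A$ with $A=\tfrac1n\ones\ones^{\top}$ has unconstrained value $1$ but $\optz=\sqrt{k/n}$), so any estimate routed through it, including the Cauchy--Schwarz bound on $\E[\phi(X)\mathbf{1}_{E^c}]$ via $\E[\phi(X)^2]$, loses a dimension-dependent factor and cannot yield a universal constant. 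The paper's resolution is Lemma~\ref{lem:rhs}: for any $v$ with $\|v\|_0=t$ and $\|v\|_2=w$, write $v$ as a sum of $\lceil t/k\rceil$ disjointly supported $k$-sparse pieces; subadditivity and positive homogeneity of the semi-norm bound $\phi(v)$ by $\optz\sum_i\|v^i\|_2$, and orthogonality of the pieces plus the $\ell_1$--$\ell_2$ inequality give $\phi(v)\le w\sqrt{\lceil t/k\rceil}\,\optz$. This replaces the unconstrained maximum by $\optz$ itself at the cost of only $\sqrt{t/k}$, which is then absorbed by the exponential tails of $\|X\|_0$ and $\|X\|_2$ via a careful partition of the infeasible region and term-by-term integration. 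This lemma is the core idea of the proof and is absent from your proposal.

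Two secondary points. First, Markov applied to $\|X\|_2^2$ gives only a $\Pr(\|X\|_2\ge w)\lesssim 1/w^2$ tail; while this suffices for the feasibility union bound, it is far too weak for the expectation of $\phi(X)$ over the region $\{\|X\|_2>g\}$ (the resulting contribution is about $2(1+1/\gamma)/g\approx 2.4$ times $\optz$, which alone pushes the final constant above $5$). The paper instead proves a dimension-free sub-Gaussian bound $\Pr(\|X\|_2\ge t)\le c_2 e^{-t^2}$ (Lemma~\ref{lemma:l2}) by applying the same Chernoff machinery to $\sum_i X_i^2$, which is needed to reach $2.95$. Second, your Paley--Zygmund step for the algorithmic statement is unnecessary (and would again require second-moment control of $\phi(X)$): once the truncation term is bounded by $\alpha\,\optz$, the conditional expectation of $\phi(X/g)$ given feasibility is at least $(\opto-\alpha\,\optz)/g$, so some feasible scenario attains that value; this already yields both the ratio bound and the positive-probability guarantee.
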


We note that the constants $2.95$ and $3.25$ can be improved if one considers higher values of the lower bound on $k$. Also with a small additional loss to the constant $3.25$, the success probability of the algorithm can be boosted to an arbitrary constant (by also running the rounding procedure multiple times). 

The high-level idea of the proof is the following: We need to show that with positive probability, $\frac{X}{g}$ is feasible for the~\ref{eq:SPCA} and has large objective value. Standard concentration shows that feasibility holds with ``large'' constant probability. To control the value, notice that the rounding in unbiased, namely $\E X = x$, and that $\phi$ is convex. Thus, the \textbf{expected} objective value of our unscaled solution is large: $\E \phi(X) \ge \phi(\E X) = \phi(x) = \optz$ (the scaling only introduces an additional $\frac{1}{g}$ factor in the bound).

The issue is that, in principle, our solution $X$ could take a very objective large value with very small probability (and this happening when it is infeasible), and taking very small value with probability close to 1. To show that this does not happen, we need to control the upper tail of $\phi(X)$ (and with something more effective than Markov's inequality). 

	However, it is not clear how to obtain concentration for $\phi(X)$ since we cannot control its ``Lipschitzness''; for example, in the special case $\phi = \|.\|_A$, we do not have any assumptions on the magnitude of the entries of $A$, and in particular its relationship to $\optz$. 

	To handle this issue, we use solely $\|X\|_0$ and $\|X\|_2$ to control $\phi(X)$. More specifically, we upper bound the largest possible objective value of a solution with $\|.\|_0 = t$ and $\|.\|_2 = w$, and show that it is at most $\approx w \sqrt{t/k}\, \optz$ (Lemma \ref{lem:rhs}); this provides and upper bound on $\phi(X)$ as long as $\|X\|_0 \le t$ and $\|X\|_2 \le w$. Then, the we obtain the desired control over the behavior of $\phi(X)$ by employing concentration for $\|.\|_0$ and $\|.\|_2$ and carefully integrating over $t$ and $w$.

\medskip

A natural question is how good the constant $2.95$ presented in Theorem~\ref{thm:main} is; we present a lower bound on this constant.
\begin{theorem}\label{thm:lower}
There exists a rank one positive-semidefinite matrix $A$ such that with $\phi = \|.\|_A$ we have that $$\opto \geq 1.32\cdot \optz.$$
\end{theorem}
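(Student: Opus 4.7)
The plan is to exhibit an explicit rank-one matrix $A = aa^\top$ together with a specific sparsity parameter $k$ for which direct computation gives the desired ratio. I would take $a = (\alpha, 1, 1, \ldots, 1)\in\mathbb{R}^{1+m}$ with a single ``large'' coordinate $\alpha > 1$ and $m$ coordinates equal to $1$, and choose $\alpha$, $m$, $k$ at the end. Since $\phi(x) = \|x\|_A = |a^\top x|$, the sparse problem is solvable explicitly: for any support $S$ with $|S|\le k$, the maximum of $|a^\top x|$ over $\|x\|_2 \le 1$ equals $\|a_S\|_2$ by Cauchy--Schwarz, so the best $S^*$ consists of the spike together with any $k-1$ of the other coordinates, giving $\optz = \sqrt{\alpha^2 + k - 1}$.

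To lower-bound $\opto$, I would exhibit a single feasible point that spreads mass across all $m+1$ coordinates. By the symmetry between the $1$-coordinates, it suffices to look at $x = (y, z, z, \ldots, z)$ with $y, z > 0$. Imposing both norm constraints with equality, $y^2 + m z^2 = 1$ and $y + m z = \sqrt{k}$, and eliminating $y$ yields a quadratic in $z$ with two positive roots whenever $m \ge k-1$; picking the one with the larger $y$ (the coordinate weighted by $\alpha$ in the objective) gives
\[
y = \frac{\sqrt{k} + \sqrt{m(m-k+1)}}{m+1}, \qquad z = \frac{\sqrt{k} - \sqrt{(m-k+1)/m}}{m+1},
\]
and then $\opto \ge \alpha y + m z = \sqrt{k} + (\alpha-1) \cdot \frac{\sqrt{k} + \sqrt{m(m-k+1)}}{m+1}$.

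It remains to tune $\alpha$ and $m$. Letting $m\to\infty$, the coefficient of $\alpha - 1$ tends to $1$, so asymptotically $\opto \ge \sqrt{k} + (\alpha-1)$ while $\optz = \sqrt{\alpha^2 + k-1}$. Zeroing the derivative in $\alpha$ of $(\sqrt{k}+\alpha-1)/\sqrt{\alpha^2+k-1}$ pinpoints the optimum at $\alpha = 1 + \sqrt{k}$, at which
\[
\frac{\opto}{\optz} \;\ge\; \frac{2\sqrt{k}}{\sqrt{2k + 2\sqrt{k}}} \;=\; \sqrt{\frac{2\sqrt{k}}{\sqrt{k}+1}},
\]
which is increasing in $k$ and tends to $\sqrt{2}$. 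I would then verify numerically that this limit exceeds $1.32$ already at $k=100$ (it equals $\sqrt{20/11} \approx 1.348$), and pick $m$ large enough (e.g.\ $m \ge 10^3$ suffices) so that the finite-$m$ expression above stays within a tiny margin of the limit.

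The main obstacle is not any individual calculation but rather finding the right shape of $a$ to begin with: most natural candidates (uniform vectors, or two-block vectors whose top block already contains the $k$ largest coordinates) yield ratio exactly $1$, because the top-$k$ solution either already saturates the $\ell_1$ constraint or the $\ell_1$ constraint is so slack that both problems reduce to maximizing $|a^\top x|$ over $\|x\|_2 \le 1$. The role of the single spike with $\alpha = 1 + \sqrt{k}$ is that at the $\optz$-optimum one computes $\|x\|_1 = (\alpha + k - 1)/\sqrt{\alpha^2 + k - 1} < \sqrt{k}$, leaving slack in the $\ell_1$ budget which \eqref{eq:SPCA1} exploits by placing positive mass on the many remaining coordinates of value $1$.
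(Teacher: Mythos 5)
Your proof is correct, and it takes a genuinely different route from the paper's. The paper restricts attention to matrices $A = x^*(x^*)^\top$ with $\|x^*\|_2 = 1$ \emph{and} $\|x^*\|_1 \le \sqrt{k}$, so that $x^*$ itself is optimal for the $\ell_1$-relaxation and $\opto = 1$ exactly; it then reformulates the worst case over this family as a four-variable optimization problem and solves it numerically, certifying a single feasible point at $k = 10000$. You instead take the unnormalized spike $a = (\alpha, 1, \dots, 1)$, compute $\optz = \sqrt{\alpha^2 + k - 1}$ in closed form, and only \emph{lower-bound} $\opto$ via an explicit two-level feasible point; the optimal spike height $\alpha = 1+\sqrt{k}$ drops out analytically and the resulting ratio $\sqrt{2\sqrt{k}/(\sqrt{k}+1)}$ already exceeds $1.32$ at $k=100$ for $m$ large. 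Your instance is genuinely outside the paper's family: after normalization, $\|a\|_1/\|a\|_2 \approx \sqrt{m} \gg \sqrt{k}$, so the generating vector is not $\ell_1$-feasible and the $\ell_1$-optimum is \emph{not} attained at $a/\|a\|_2$ --- this extra freedom is exactly why your asymptotic ratio $\sqrt{2}$ beats the paper's reported $\approx 1.324$, and your argument has the further advantages of being fully analytic (no grid search) and of working at a modest $k$. Two minor points: (i) for large $m$ the smaller root of your quadratic in $z$ is in fact negative, not positive, but this is immaterial since you take the larger-$y$ root and one checks $y, z \ge 0$ there; (ii) at $k=100$, $m=10^3$ the finite-$m$ ratio is about $1.3203$, so the margin over $1.32$ is razor-thin --- taking $m = 10^4$ gives $\approx 1.346$ and leaves a comfortable buffer.
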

	
	
	Since there is a big gap between the upper and lower bounds obtained on the worst-case value of the multiplicative constant factor, it is an open question which of them is closer to the actual worst-case bound. In our limited computational experiments, we saw ratios significantly lesser than $1.32$, so we speculate that the lower bound of $1.32$ is perhaps closer to the actual constant.  

\section{Proof of Theorem \ref{thm:main}}
\label{sec:proof}


\subsection{Preliminaries}\label{sec:pre}

In this section we collect a few technical results that will be needed in the sequel. The first is a simple observation on the arithmetico-geometric series, for which we include a proof for completeness.

\begin{lemma}\label{lem:sum}
$\sum_{t = k}^n te^{-t} \leq \eta(k) \triangleq e^{-k}\left[\frac{k e^2 - (k -1) e}{ (e - 1)^2} \right]$.
\end{lemma}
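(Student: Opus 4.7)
The plan is to note that all summands $t e^{-t}$ are nonnegative, so the finite sum $\sum_{t=k}^n t e^{-t}$ is bounded above by the infinite tail $\sum_{t=k}^\infty t e^{-t}$. It therefore suffices to evaluate this tail in closed form and check that the resulting expression matches $\eta(k)$.

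To compute the infinite tail, I would treat it as an arithmetico-geometric series in the variable $x = e^{-1}$ and obtain it by differentiating a geometric series. Concretely, starting from
$$\sum_{t=k}^\infty x^t = \frac{x^k}{1-x},$$
I would differentiate both sides with respect to $x$ and then multiply by $x$ to get
$$\sum_{t=k}^\infty t\, x^t = \frac{x^k \bigl(k - (k-1)x\bigr)}{(1-x)^2}.$$
Substituting $x = e^{-1}$ and clearing the negative powers of $e$ (multiply numerator and denominator by $e^2$) yields
$$\sum_{t=k}^\infty t\, e^{-t} = e^{-k} \cdot \frac{k e^2 - (k-1) e}{(e-1)^2} = \eta(k),$$
which combined with the first observation gives the claimed inequality.

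There is no real obstacle here; the only care needed is in the book-keeping of the differentiation step (namely, $\frac{d}{dx}\frac{x^k}{1-x} = \frac{k x^{k-1}(1-x) + x^k}{(1-x)^2}$, which simplifies to $\frac{k x^{k-1} - (k-1) x^k}{(1-x)^2}$) and in the algebraic normalization that turns $e^{-1}$'s in the numerator into positive powers of $e$. A sanity check at $k=1$ (where the sum equals $\frac{x}{(1-x)^2}$, so with $x=e^{-1}$ one gets $\frac{e}{(e-1)^2} = e^{-1}\cdot \frac{e^2}{(e-1)^2}$, matching $\eta(1)$) confirms the formula.
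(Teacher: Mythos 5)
Your proof is correct and is essentially the paper's argument in a different guise: both amount to summing the arithmetico-geometric series $\sum_t t e^{-t}$, the paper via the shift-and-subtract trick (computing $eS - S$ for the finite sum and then extending to infinity), you via termwise differentiation of the geometric series applied directly to the infinite tail. Your route has the small advantage of exhibiting $\eta(k)$ as \emph{exactly} $\sum_{t=k}^{\infty} t e^{-t}$ rather than just an upper bound, and every step checks out (termwise differentiation is valid at $x = e^{-1} < 1$, and the normalization $\frac{e^{-k}(k-(k-1)e^{-1})}{(1-e^{-1})^2} = e^{-k}\frac{ke^2-(k-1)e}{(e-1)^2}$ is right).
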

\begin{proof} Let $S: = \sum_{t = k}^n te^{-t}$. Then $eS = \sum_{t = k}^n t e^{-(t -1)}$ and therefore
\begin{eqnarray}
(e - 1)S = ke^{-k +1} + \sum_{t = k +1}^n e^{-(t-1)} - ne^{-n} \leq ke^{-k +1} + \sum_{t = k +1}^{\infty} e^{-(t-1)} \leq ke^{-k +1} + \frac{e^{-k}}{1 - e^{-1}},
\end{eqnarray}
	and therefore $S \leq e^{-k}\left[\frac{k e^2 - (k -1) e}{ (e - 1)^2} \right]$.
\end{proof}
	
	We will also need the following conditional layer-cake decomposition, which follows, for instance, by applying the standard layer-cake decomposition~\cite{lieb} to the law of $Z$ conditioned on $Z \ge t$.
	
\begin{lemma}[Layer-cake Decomposition] \label{lemma:layer}
Let $Z$ be a non-negative random variable. Then for any $t \ge 0$ $$\E[Z \mid Z \ge t] \Pr(Z \ge t) = t \cdot \Pr(Z \ge t) + \int_t^{\infty} \Pr(Z \ge \alpha) d\alpha.$$ 
\end{lemma}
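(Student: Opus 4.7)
The plan is to derive this identity by applying the standard (unconditional) layer-cake formula $\E W = \int_0^\infty \Pr(W \ge \alpha)\, d\alpha$ to the conditional distribution of $Z$ given $\{Z \ge t\}$, exactly as the parenthetical remark in the lemma statement suggests. First I would dispose of the degenerate case $\Pr(Z \ge t) = 0$: both sides are then zero, since the left-hand side vanishes by the usual convention for conditional expectation and the integrand $\Pr(Z \ge \alpha)$ on the right is bounded above by $\Pr(Z \ge t) = 0$ for every $\alpha \ge t$. Assuming $\Pr(Z \ge t) > 0$, let $Y$ denote a random variable distributed as $Z$ conditioned on $\{Z \ge t\}$; then $Y \ge t$ almost surely and $\Pr(Y \ge \alpha) = \Pr(Z \ge \alpha)/\Pr(Z \ge t)$ for $\alpha \ge t$, while $\Pr(Y \ge \alpha) = 1$ for $\alpha \in [0,t]$.

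Second, I would apply the standard layer-cake to $Y$ and split the integral at $\alpha = t$: the $[0,t]$ piece contributes $t$, and the $[t,\infty)$ piece contributes $\int_t^\infty \Pr(Z \ge \alpha)\,d\alpha \big/ \Pr(Z \ge t)$. Multiplying through by $\Pr(Z \ge t)$ yields the stated identity. A parallel self-contained derivation, which I would mention for completeness, starts from the pointwise identity $Z \cdot \mathbf{1}(Z \ge t) = t \cdot \mathbf{1}(Z \ge t) + \int_t^\infty \mathbf{1}(Z \ge \alpha)\, d\alpha$ (checked by considering the cases $Z < t$ and $Z \ge t$ separately), takes expectations, and applies Tonelli to swap $\E$ with the $\alpha$-integral; the result then follows because $\E[Z \cdot \mathbf{1}(Z \ge t)] = \E[Z \mid Z \ge t] \Pr(Z \ge t)$ by definition. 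There is no real obstacle here: the only point requiring care is the measurability of $\alpha \mapsto \Pr(Z \ge \alpha)$ (monotone, hence Borel) so that Tonelli applies, and the handling of the degenerate case above; the rest is a definitional unpacking of the classical layer-cake formula.
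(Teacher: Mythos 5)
Your proof is correct and follows exactly the route the paper indicates: applying the standard layer-cake formula to the law of $Z$ conditioned on $\{Z \ge t\}$ and splitting the integral at $t$ (the paper gives only this one-line justification and no further details). Your handling of the degenerate case $\Pr(Z \ge t) = 0$ and the alternative derivation via the pointwise identity plus Tonelli are both sound additions, but the core argument matches the paper's.
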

	
Next we present a multiplicative Chernoff (or Poisson-type) bound that has good constants for our regime (notice the constant 1 in front of $t$ in the exponent) and has a simple form that we can later integrate over; the proof is standard and is presented in Appendix \ref{app:chernoff}. 

\begin{lemma} \label{lemma:chernoff}
	Consider independent random variables $Z_1, Z_2, \ldots, Z_n$ where $Z_i \in [0, b_i]$. Letting $\mu_i = \E Z_i$, we have
	\begin{align*}
		\Pr\left(\sum_i Z_i \ge t \right) \le e^{\sum_i \mu_i (1 + (e-2) b_i)} \cdot e^{-t}.
	\end{align*}
\end{lemma}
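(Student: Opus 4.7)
The plan is to follow the standard Chernoff (moment-generating-function) approach, but tune the computation so that the exponent $-t$ on the right-hand side appears with coefficient exactly $1$ rather than some optimization variable. Concretely, I would first apply Markov's inequality to the random variable $e^{\sum_i Z_i}$, giving
\begin{equation*}
\Pr\!\left(\sum_i Z_i \ge t\right) \;=\; \Pr\!\left(e^{\sum_i Z_i} \ge e^{t}\right) \;\le\; e^{-t}\,\E\!\left[e^{\sum_i Z_i}\right].
\end{equation*}
Independence of the $Z_i$'s factorizes the moment generating function as $\E[e^{\sum_i Z_i}] = \prod_i \E[e^{Z_i}]$, so everything reduces to a per-coordinate bound on $\E[e^{Z_i}]$ for $Z_i \in [0,b_i]$.

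For that per-coordinate bound I would use the pointwise inequality $e^{x} \le 1 + x + (e-2)\,x^2$ valid on $x\in[0,1]$. This is a short calculus check: the function $g(x) = 1 + x + (e-2)x^2 - e^x$ satisfies $g(0)=g(1)=0$ and a quick look at $g''$ shows $g''$ switches sign once on $[0,1]$ (from positive to negative), so combined with $g'(0)=0$ and $g'(1)<0$ one gets $g \ge 0$ throughout $[0,1]$. (The constant $e-2$ is precisely what makes the bound tight at the endpoint $x=1$.) Applying this with $x = Z_i$ and taking expectations, together with the variance-style estimate $\E[Z_i^2] \le b_i\,\E[Z_i] = b_i \mu_i$ coming from $Z_i \in [0,b_i]$, yields
\begin{equation*}
\E[e^{Z_i}] \;\le\; 1 + \mu_i + (e-2)\,b_i\,\mu_i \;=\; 1 + \mu_i\bigl(1 + (e-2)b_i\bigr).
\end{equation*}
Then invoking $1+y \le e^y$ gives the clean form $\E[e^{Z_i}] \le \exp\!\bigl(\mu_i(1+(e-2)b_i)\bigr)$.

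Finally, multiplying these per-coordinate bounds and plugging back into the Chernoff inequality yields the claimed
\begin{equation*}
\Pr\!\left(\sum_i Z_i \ge t\right) \;\le\; e^{-t}\prod_i \exp\!\bigl(\mu_i(1+(e-2)b_i)\bigr) \;=\; \exp\!\Bigl(\sum_i \mu_i(1+(e-2)b_i)\Bigr)\cdot e^{-t},
\end{equation*}
which is exactly the statement. The only non-routine ingredient is the quadratic upper bound $e^x \le 1 + x + (e-2)x^2$ on $[0,1]$; the rest is bookkeeping. The mild implicit assumption $b_i \le 1$ (under which the quadratic bound holds) is the one subtlety worth flagging, since in the applications of this lemma the $Z_i$'s will be bounded above by $1$ (e.g., indicator-type variables arising from the rounding in Algorithm \ref{algo}), so this restriction is harmless.
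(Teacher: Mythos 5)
Your proof is correct and follows essentially the same route as the paper's: Markov's inequality applied to $e^{\sum_i Z_i}$, a quadratic upper bound on the exponential that produces the constant $e-2$, and $1+y \le e^y$; the only (cosmetic) difference is that you apply $e^x \le 1 + x + (e-2)x^2$ directly to $x = Z_i$ and then use $\E[Z_i^2] \le b_i\mu_i$, whereas the paper first invokes the convexity chord bound $e^x \le 1 + x\,\frac{e^{b_i}-1}{b_i}$ on $[0,b_i]$ and only then applies the quadratic bound at the endpoint $y = b_i$. The implicit restriction $b_i \le 1$ that you rightly flag is equally present in the paper's argument (which applies the quadratic bound to $y = b_i \in [0,1]$) and is satisfied in both places the lemma is used, so this is a shared, harmless caveat rather than a gap in your proof.
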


We will also need the following estimate on Gaussian integrals. 

\begin{lemma}[Lemma 2, Chapter 7 of~\cite{feller}] \label{lemma:gauss}
For  all $x \geq 0$, $$\int_x^\infty e^{-\alpha^2}\,d\alpha \le \frac{e^{-x^2}}{2x}.$$
\end{lemma}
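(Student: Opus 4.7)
The plan is to apply a classical ``insert a factor'' trick that converts the integrand into something admitting an explicit antiderivative. The right-hand side is only finite for $x > 0$ (it is $+\infty$ at $x = 0$ under the natural convention), so I would restrict attention to $x > 0$; the case $x = 0$ is vacuous. The one-line idea is that $\alpha / x \ge 1$ on the interval of integration, so multiplying the integrand by $\alpha / x$ can only enlarge it, while producing a derivative of $-\tfrac{1}{2} e^{-\alpha^2}$ (up to the $1/x$ factor) that can be integrated in closed form.

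Concretely, I would first write down the pointwise bound: for every $\alpha \ge x > 0$, we have $\alpha / x \ge 1$, and since $e^{-\alpha^2} \ge 0$ this gives $e^{-\alpha^2} \le (\alpha/x)\, e^{-\alpha^2}$. Integrating this inequality over $[x, \infty)$ yields
\begin{equation*}
\int_x^\infty e^{-\alpha^2}\, d\alpha \;\le\; \frac{1}{x}\int_x^\infty \alpha\, e^{-\alpha^2}\, d\alpha.
\end{equation*}
Then I would evaluate the right-hand side using the antiderivative $-\tfrac{1}{2} e^{-\alpha^2}$ of $\alpha\, e^{-\alpha^2}$, getting $\tfrac{1}{x} \cdot \tfrac{1}{2} e^{-x^2} = \tfrac{e^{-x^2}}{2x}$, which is precisely the claimed bound. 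There is no genuine obstacle: the only step that requires any justification is the vanishing of the upper boundary term, which is immediate from $e^{-\alpha^2} \to 0$ as $\alpha \to \infty$. The entire argument is a short two-line calculation, and the ``hard part'' is really just recognizing the scaling trick that makes the integrand exactly integrable.
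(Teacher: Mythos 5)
Your proof is correct: the pointwise bound $e^{-\alpha^2}\le(\alpha/x)e^{-\alpha^2}$ for $\alpha\ge x>0$ followed by the exact antiderivative $-\tfrac12 e^{-\alpha^2}$ gives precisely the claimed estimate, and your handling of $x=0$ as vacuous is fine. The paper supplies no proof of its own (it simply cites Feller), and your argument is the standard one found in that reference, so there is nothing to reconcile.
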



\subsection{Value Function with Respect to Right-hand Side}\label{sec:vfrhs}

We now bound how much $\OPT$ can change as we change the right-hand side of the~\ref{eq:SPCA}. To make this precise, for $t \in \mathbb{Z}_{+}$ and $w \ge 0$ we define  
\begin{eqnarray}
\OPT(t,w) & \triangleq & \textup{max}_{x}\ \phi (x) \notag\\
 		  & \textup{s.t.} & \|x\|_2 \leq w \label{eq:rhs}\\
		  && \|x\|_0 \leq t.  \notag	
\end{eqnarray}
Thus $\OPT(k, 1)$ is the same as $\OPT$. The main result of this section is the following upper bound. 

\begin{lemma}[RHS Changes]\label{lem:rhs}
		Let $t \in \mathbb{Z}_+$ and $w \geq 0$. Then 
		$$\OPT(t,w) \leq \left(w\sqrt{\left\lceil\frac{t}{k} \right\rceil} \right) \OPT.$$
\end{lemma}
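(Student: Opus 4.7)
The plan is to reduce a general feasible solution of the right-hand-side problem to a sum of $k$-sparse pieces, then combine subadditivity of $\phi$ with Cauchy--Schwarz. Concretely, fix any feasible $x$ with $\|x\|_0 \le t$ and $\|x\|_2 \le w$, and set $q \triangleq \lceil t/k \rceil$. First I would partition the support of $x$ arbitrarily into $q$ blocks $S_1,\dots,S_q$, each of size at most $k$, and write $x = \sum_{j=1}^q x^{(j)}$ where $x^{(j)}$ is the restriction of $x$ to $S_j$. By construction each piece is $k$-sparse, and the disjointness of supports gives $\sum_{j=1}^q \|x^{(j)}\|_2^2 = \|x\|_2^2 \le w^2$.

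Next I would apply subadditivity of $\phi$ to get $\phi(x) \le \sum_{j=1}^q \phi(x^{(j)})$. For each nonzero piece, rescaling $x^{(j)}$ by $1/\|x^{(j)}\|_2$ produces a feasible point of the original~\ref{eq:SPCA} problem (unit $\ell_2$-norm and at most $k$ nonzero entries), so by positive homogeneity
\begin{equation*}
\phi(x^{(j)}) \;=\; \|x^{(j)}\|_2 \cdot \phi\!\left(\frac{x^{(j)}}{\|x^{(j)}\|_2}\right) \;\le\; \|x^{(j)}\|_2 \cdot \OPT.
\end{equation*}
Summing over $j$ yields $\phi(x) \le \OPT \cdot \sum_{j=1}^q \|x^{(j)}\|_2$.

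Finally, Cauchy--Schwarz on the vector $(\|x^{(1)}\|_2,\dots,\|x^{(q)}\|_2)$ against the all-ones vector gives
\begin{equation*}
\sum_{j=1}^q \|x^{(j)}\|_2 \;\le\; \sqrt{q}\cdot\sqrt{\sum_{j=1}^q \|x^{(j)}\|_2^2} \;\le\; w\sqrt{q} \;=\; w\sqrt{\lceil t/k\rceil}.
\end{equation*}
Taking the supremum over feasible $x$ concludes the proof.

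There is no real obstacle here: the only slightly delicate point is that the decomposition must be into disjoint supports so that $\sum_j \|x^{(j)}\|_2^2$ equals $\|x\|_2^2$ (rather than only being bounded by it), which is what allows Cauchy--Schwarz to deliver the clean factor $\sqrt{\lceil t/k \rceil}$. The ceiling comes out naturally from needing exactly $\lceil t/k \rceil$ blocks of size at most $k$ to cover the support.
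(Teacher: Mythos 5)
Your proof is correct and follows essentially the same route as the paper: a disjoint-support decomposition into $\lceil t/k\rceil$ pieces that are $k$-sparse, then subadditivity, positive homogeneity, and the $\ell_1$--$\ell_2$ (Cauchy--Schwarz) bound. The only cosmetic difference is that you handle the scaling in $w$ within the same Cauchy--Schwarz step, whereas the paper first normalizes to $w=1$ via a separate homogeneity proposition.
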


	To prove this result, we start with the following observation which controls the dependence on $w$ and follows directly from the positive homogeneity of the functions $\phi$ and $\|x\|_2$.
	
\begin{proposition} \label{prop:wOPT}
For every $w \ge 0$, $\OPT(t,w) = w \cdot \OPT(t,1)$.
\end{proposition}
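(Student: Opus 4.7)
My plan is to split on the sign of $w$ and reduce the case $w > 0$ to the unit-ball case by a one-line rescaling. First I would dispose of the degenerate case $w = 0$: here the constraint $\|x\|_2 \leq 0$ forces $x = \zeros$, giving $\OPT(t,0) = \phi(\zeros) = 0$, which matches $0 \cdot \OPT(t,1)$.

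For $w > 0$, I would apply the change of variables $x = w y$. The constraint $\|x\|_2 \leq w$ becomes $\|y\|_2 \leq 1$, and since scaling by a nonzero scalar preserves the support, $\|x\|_0 \leq t$ if and only if $\|y\|_0 \leq t$; thus $y \mapsto w y$ is a bijection between the feasible sets of problem \eqref{eq:rhs} at right-hand sides $(t,1)$ and $(t,w)$. By the positive homogeneity of the semi-norm $\phi$, the objective transforms as $\phi(x) = \phi(w y) = w\,\phi(y)$, so taking the supremum over $y$ yields $\OPT(t,w) = w \cdot \OPT(t,1)$.

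There is no substantive obstacle here; the argument is a direct scaling computation using only property (i) of the semi-norm $\phi$ together with the scale-equivariance of the Euclidean norm and the scale-invariance of the support (and therefore of $\|\cdot\|_0$) under multiplication by a nonzero scalar. The only minor subtlety is remembering that the $w = 0$ case must be handled separately, because the substitution $y = x/w$ is not defined there; but this case is immediate from $\phi(\zeros) = 0$.
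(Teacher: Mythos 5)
Your proof is correct and follows the same route the paper intends: the paper simply asserts that the proposition ``follows directly from the positive homogeneity of the functions $\phi$ and $\|x\|_2$,'' and your rescaling bijection $x = wy$ together with $\phi(wy) = w\,\phi(y)$ is exactly that argument spelled out. The separate treatment of $w=0$ is a sensible bit of care that the paper omits.
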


	The following proposition then controls the dependence on $t$.

\begin{proposition} \label{prop:upper-bound-SPCA-02}
For every $t \geq k$, $\OPT(t,1) \leq \sqrt{\left\lceil\frac{t}{k} \right\rceil} \OPT(k, 1)$.
\end{proposition}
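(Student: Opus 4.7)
The plan is to decompose $x$ into pieces that are each feasible for the $k$-sparse problem, then use subadditivity of $\phi$ and the relation in Proposition~\ref{prop:wOPT} to assemble the bound.

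More concretely, let $x$ be any feasible solution for $\OPT(t,1)$, so $\|x\|_2 \le 1$ and $|\mathrm{supp}(x)| \le t$. Set $m \triangleq \lceil t/k \rceil$ and partition the support of $x$ (arbitrarily) into $m$ index sets $S_1,\ldots,S_m$ each of size at most $k$. Let $x^{(i)}$ denote the vector that agrees with $x$ on $S_i$ and is zero elsewhere, so that $x=\sum_{i=1}^m x^{(i)}$. By sub-additivity and non-negativity of the semi-norm $\phi$ we get $\phi(x)\le\sum_{i=1}^m \phi(x^{(i)})$. Each $x^{(i)}$ has $\|x^{(i)}\|_0\le k$ and $\|x^{(i)}\|_2\le\|x\|_2\le 1$, so it is feasible for problem $\OPT(k,\|x^{(i)}\|_2)$, and Proposition~\ref{prop:wOPT} gives $\phi(x^{(i)})\le \|x^{(i)}\|_2\cdot \OPT(k,1)$.

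Summing these inequalities and then using Cauchy--Schwarz on the sequence $(\|x^{(i)}\|_2)_{i=1}^m$ against the all-ones vector, I get
\[
\phi(x)\;\le\;\OPT(k,1)\sum_{i=1}^m \|x^{(i)}\|_2\;\le\;\OPT(k,1)\,\sqrt{m}\,\sqrt{\sum_{i=1}^m \|x^{(i)}\|_2^{\,2}}.
\]
Since the supports $S_1,\ldots,S_m$ are disjoint, $\sum_i \|x^{(i)}\|_2^2=\|x\|_2^2\le 1$, which yields $\phi(x)\le \sqrt{m}\,\OPT(k,1)=\sqrt{\lceil t/k\rceil}\,\OPT(k,1)$. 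Taking the supremum over feasible $x$ gives the proposition.

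There is no real obstacle here; the only step that requires care is the use of Cauchy--Schwarz, since bounding $\sum_i \|x^{(i)}\|_2$ directly by $1$ would lose the crucial $\sqrt{m}$ factor and give a weaker inequality. The $\sqrt{m}$-blow-up on the sum of per-block $\ell_2$ norms, set against the disjointness-based identity $\sum_i \|x^{(i)}\|_2^2=\|x\|_2^2$, is exactly what produces the claimed $\sqrt{\lceil t/k\rceil}$ factor.
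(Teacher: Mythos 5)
Your proof is correct and follows essentially the same route as the paper's: decompose the support into $\lceil t/k\rceil$ disjoint blocks of size at most $k$, apply subadditivity, bound each block's contribution by $\|x^{(i)}\|_2\cdot\OPT(k,1)$ via positive homogeneity (you route this through Proposition~\ref{prop:wOPT}, the paper rescales directly — same thing), and finish with Cauchy--Schwarz against the disjointness identity $\sum_i\|x^{(i)}\|_2^2=\|x\|_2^2$. No gaps.
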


\begin{proof}
This essentially follows from subadditivity of $\phi$. More precisely, let $x^{\ast}$ be an optimal solution corresponding to $\OPT(t,1)$, i.e. optimal for \eqref{eq:rhs} with right-hand side $w = 1$. Since $\|x\|_0 \le t$, consider a decomposition $x^* = x^1 + \ldots + x^{\lceil\frac{t}{k}\rceil}$ where each vector $x^i$ has $\|x^i\|_0 \le k$ and they have disjoint support. By subadditivity of $\phi$ we have 
\begin{align}\label{eqlocal:1}
	\OPT(t,1) = \phi(x^{\ast}) = \phi\left(\sum_{i = 1}^{\lceil\frac{t}{k}\rceil}x^i\right) \leq \sum_{i = 1}^{\lceil\frac{t}{k}\rceil} \phi(x^i).
\end{align}
	But the scaled vector $\frac{x^i}{\|x^i\|_2}$ is a feasible solution to the optimization problem corresponding to $\OPT(k,1)$, and so using the positive homogeneity of $\phi$ we have for each $i$
	\begin{align*}
		\phi(x^i) = \|x^i\|_2 \phi\left(\frac{x^i}{\|x^i\|_2}\right) \le \|x^i\|_2 \OPT(1,k),
	\end{align*}	
and thus 
	\begin{align}
		\OPT(t,1) \le \OPT(1,k)\cdot \sum_{i = 1}^{\lceil \frac{t}{k} \rceil} \|x^i\|_2. \label{eq:proofRHS}
	\end{align}

Moreover, by construction the $x^i$'s are orthogonal to each other, and hence 
$$1 = \|x^{\ast}\|_2^2 = \sum_{i = 1}^{\lceil\frac{t}{k}\rceil}\|x^i\|_2^2.$$ Using the standard $\ell_1$-$\ell_2$ comparison inequality $\sum_{i=1}^d |a_i| \le \sqrt{d} \cdot  \sum_{i=1}^d a^2_i$, we obtain that 
$\sum_{i = 1}^{\lceil\frac{t}{k}\rceil} \|x^i\|_2\leq \sqrt{\lceil\frac{t}{k}\rceil}$. Substituting this in \eqref{eq:proofRHS} then concludes the proof.
\end{proof}

\begin{proof}[Proof of Lemma \ref{lem:rhs}]
Follows directly by combining Propositions \ref{prop:wOPT} and \ref{prop:upper-bound-SPCA-02}:
$$\OPT(t,w) \leq w \cdot \OPT(t,1) \le  \left(w\sqrt{\left\lceil\frac{t}{k} \right\rceil} \right) \OPT(k,1).$$
\end{proof}	


\subsection{Concentration Inequalities for $\ell_0$-norm}\label{sec:l0}

Note that $\|X\|_0 = \sum_{i = 1}^n \varepsilon_i$ is the sum of independent Bernoulli random variables. Moreover, since $\varepsilon_i = 1$ with probability $p_i = \textup{min}\{s\frac{|x_i|}{\|x\|_1},1\}$ and $s= \gamma \cdot k$, we have $\E\|X\|_0 = \sum_{i \in [n]}p_i \leq \gamma k \ll k$; thus $X$ (and hence the scaled version $\frac{X}{g}$) satisfies the sparsity constraint $\|X\|_0 \le k$ in expectation. Moreover, applying Lemma \ref{lemma:chernoff} with $b_i = 1$ and $\mu_i = p_i$ we obtain the following tail bound. 

\begin{lemma} \label{lemma:l0}
$$\Pr\left( \|X\|_0 \geq t \right) \leq e^{c_1 \cdot k - t},$$ where $c_1 = (e-1)\gamma$. 
\end{lemma}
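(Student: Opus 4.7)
The plan is to invoke the Chernoff-type bound of Lemma \ref{lemma:chernoff} directly on the sum $\|X\|_0 = \sum_{i=1}^n \varepsilon_i$, exploiting the fact that the $\varepsilon_i$ are independent Bernoullis bounded by $1$.

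First, I would set up the parameters needed for Lemma \ref{lemma:chernoff}. Each $\varepsilon_i \in \{0,1\}$ is independent, so we may take $b_i = 1$ and $\mu_i = \E \varepsilon_i = p_i = \min\{s|x_i|/\|x\|_1,\,1\}$. Next I would bound $\sum_i \mu_i$: since $\min\{s|x_i|/\|x\|_1,1\} \le s|x_i|/\|x\|_1$, we get
\[
\sum_{i=1}^n p_i \;\le\; \frac{s}{\|x\|_1}\sum_{i=1}^n |x_i| \;=\; s \;=\; \gamma k.
\]

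Plugging into Lemma \ref{lemma:chernoff} with $b_i=1$ yields the factor $\mu_i(1 + (e-2)\cdot 1) = (e-1)\mu_i$ in the exponent, so
\[
\Pr\!\left(\|X\|_0 \ge t\right) \;\le\; \exp\!\left((e-1)\sum_i p_i\right) \cdot e^{-t} \;\le\; e^{(e-1)\gamma k}\cdot e^{-t} \;=\; e^{c_1 k - t},
\]
which is exactly the claimed bound with $c_1 = (e-1)\gamma$.

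There is essentially no obstacle here — the whole statement is a one-line consequence of the Chernoff bound already proven in the appendix, combined with the trivial upper bound on $\sum_i p_i$ coming from the definition of the rounding probabilities. The only minor thing to be careful about is noting that the truncation in $p_i = \min\{s|x_i|/\|x\|_1,1\}$ still gives $\sum_i p_i \le \gamma k$ (clipping only decreases the sum), so that the desired exponent indeed matches $c_1 k = (e-1)\gamma k$.
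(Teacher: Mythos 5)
Your proof is correct and is exactly the argument the paper uses: apply Lemma \ref{lemma:chernoff} to the independent Bernoullis $\varepsilon_i$ with $b_i = 1$ and $\mu_i = p_i$, and bound $\sum_i p_i \le s = \gamma k$ so the exponent becomes $(e-1)\gamma k = c_1 k$. Nothing is missing.
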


As a consequence, we have the following estimate for the expected value on the tail of $\|X\|_0$.
	\begin{corollary} \label{cor:intl0}
		For all $y \ge 0$, $$\sum_{t \in \Z_{+}, t \ge y} t \,\Pr\left(\|X\|_0 = t\right) \le e^{c_1 k - y} (y + 1).$$
	\end{corollary}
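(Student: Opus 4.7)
My plan is to apply the Layer-cake Decomposition (Lemma \ref{lemma:layer}) to the nonnegative random variable $Z = \|X\|_0$ at the threshold $y$. Since $Z$ is integer-valued, the left-hand side $\E[Z \mid Z \ge y]\Pr(Z \ge y)$ equals $\E[Z\cdot \mathbf{1}_{Z \ge y}] = \sum_{t \in \Z_+, t \ge y} t\,\Pr(Z = t)$, which is exactly the quantity we want to bound. So the corollary reduces to bounding the two pieces on the right-hand side of Lemma \ref{lemma:layer}, namely $y\,\Pr(Z \ge y)$ and $\int_y^\infty \Pr(Z \ge \alpha)\,d\alpha$.

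Both pieces are controlled directly by Lemma \ref{lemma:l0}. That lemma is stated for integer $t$, but extending to real $\alpha \ge 0$ is immediate: since $\|X\|_0$ is integer-valued,
$$\Pr(\|X\|_0 \ge \alpha) \;=\; \Pr(\|X\|_0 \ge \lceil \alpha \rceil) \;\le\; e^{c_1 k - \lceil \alpha \rceil} \;\le\; e^{c_1 k - \alpha}.$$
Thus the boundary term satisfies $y\,\Pr(Z \ge y) \le y\, e^{c_1 k - y}$, and the integral tail is
$$\int_y^{\infty} \Pr(Z \ge \alpha)\,d\alpha \;\le\; \int_y^{\infty} e^{c_1 k - \alpha}\,d\alpha \;=\; e^{c_1 k - y}.$$

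Summing the two estimates yields $(y+1)\,e^{c_1 k - y}$, which is the claimed bound. I expect no real obstacle: the argument is essentially a two-line computation using Layer-cake together with the exponential tail bound already in hand, so the corollary should follow immediately with no further technical work.
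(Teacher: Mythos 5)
Your proof is correct and follows essentially the same route as the paper: apply the layer-cake decomposition to $\|X\|_0$ at threshold $y$ and plug in the tail bound of Lemma \ref{lemma:l0} for both the boundary term and the integral. The only addition is your remark justifying the tail bound at non-integer $\alpha$, which is a harmless (and in fact unnecessary, since the Chernoff bound in Lemma \ref{lemma:chernoff} already holds for real thresholds) bit of extra care.
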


	\begin{proof}
		Since the left-hand side equals $\E[\|X\|_0 \mid \|X\|_0 \ge y] \Pr(\|X\|_0 \ge y)$, employing the Layer-cake Decomposition and the lemma above we have
			\begin{align*}
			\sum_{t \in \Z_{+}, t \ge y} t \,\Pr\left(\|X\|_0 = t\right) &= y \, \text{Pr}(\|X\|_0 \geq y) + \int_{\alpha = y}^{\infty} \text{Pr}(\|X\|_0 \geq \alpha) \,d\alpha\\  
			& \leq  e^{c_1 k} \left(y e^{-y} + \int_{\alpha = y}^{\infty}e^{-\alpha}\,d\alpha\right)\\
			&=  e^{c_1 k - y} (y + 1).
		\end{align*}
	\end{proof}
	

\subsection{Concentration inequalities for $\ell_2$-norm}\label{sec:l2}

	Now we control the $\ell_2$-norm $\|X\|_2$.
It is straightforward to verify that $\E\|X\|_2 \leq \sqrt{\frac{1}{\gamma} +1} \approx 1$; in particular, the scaled solution $\frac{X}{g}$ satisfies the restriction $\|\frac{X}{g}\|_2 \le 1$ in expectation. We use Lemma \ref{lemma:chernoff} to give a simple proof of a dimension-free concentration for $\|X\|_2$ in our setting.\footnote{More general results of this type with worse constants can be obtained, for instance, via the entropy method, see Theorem 6.10 of \cite{lugosi}.}

	\begin{lemma} \label{lemma:l2} We have
	 $$\Pr(\|X\|_2 \ge  t) \le c_2 \cdot e^{-t^2},$$ where $c_2  \triangleq e^{e - 1 + \frac{1}{\gamma} + \frac{(e - 2)}{\gamma^3 k}}$.
	\end{lemma}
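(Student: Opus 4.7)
The plan is to apply the Chernoff-type Lemma \ref{lemma:chernoff} not directly to $\|X\|_2$, which has no nice sum structure, but to $\|X\|_2^2 = \sum_i X_i^2$. Note that
$$\|X\|_2^2 = \sum_{i=1}^n \frac{x_i^2}{p_i^2}\,\varepsilon_i,$$
is a sum of independent nonnegative random variables $Z_i \triangleq \frac{x_i^2}{p_i^2}\varepsilon_i$, each supported on $[0, b_i]$ with $b_i = \frac{x_i^2}{p_i^2}$ and mean $\mu_i = \frac{x_i^2}{p_i}$. Since the event $\{\|X\|_2 \ge t\}$ is identical to $\{\|X\|_2^2 \ge t^2\}$, the $e^{-t^2}$ decay will come for free from applying Lemma \ref{lemma:chernoff} at threshold $t^2$; all the work is in controlling the prefactor $\exp\bigl(\sum_i \mu_i(1 + (e-2)b_i)\bigr) = \exp\bigl(\sum_i \mu_i + (e-2)\sum_i \mu_i b_i\bigr)$, and showing it is at most $c_2$.

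To bound these two sums, I would split the indices according to whether the sampling probability is saturated. Let $L = \{i : s|x_i|/\|x\|_1 \ge 1\}$ (so $p_i = 1$) and $S = [n]\setminus L$ (so $p_i = s|x_i|/\|x\|_1$). On $L$, we have $\mu_i = x_i^2$, so $\sum_{i\in L}\mu_i \le \|x\|_2^2 \le 1$. On $S$,
$$\sum_{i\in S} \mu_i = \sum_{i\in S}\frac{x_i^2\|x\|_1}{s|x_i|} = \frac{\|x\|_1}{s}\sum_{i\in S}|x_i| \le \frac{\|x\|_1^2}{s} \le \frac{k}{\gamma k} = \frac{1}{\gamma},$$
where I used $\|x\|_1 \le \sqrt{k}$ from feasibility for the $\ell_1$-relaxation and $s = \gamma k$. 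Altogether $\sum_i \mu_i \le 1 + \frac{1}{\gamma}$.

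For the quadratic term $\mu_i b_i = x_i^4/p_i^3$, the same case split yields $\sum_{i\in L} \mu_i b_i = \sum_{i\in L} x_i^4 \le \sum_{i\in L} x_i^2 \le 1$ (using $|x_i|\le 1$), and
$$\sum_{i\in S}\mu_i b_i = \sum_{i\in S}\frac{x_i^4\|x\|_1^3}{s^3|x_i|^3} = \frac{\|x\|_1^3}{s^3}\sum_{i\in S}|x_i| \le \frac{\|x\|_1^4}{s^3} \le \frac{k^2}{\gamma^3 k^3} = \frac{1}{\gamma^3 k}.$$
Thus $\sum_i \mu_i b_i \le 1 + \frac{1}{\gamma^3 k}$. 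Combining,
$$\sum_i \mu_i + (e-2)\sum_i \mu_i b_i \le \Bigl(1 + \tfrac{1}{\gamma}\Bigr) + (e-2)\Bigl(1 + \tfrac{1}{\gamma^3 k}\Bigr) = (e-1) + \tfrac{1}{\gamma} + \tfrac{e-2}{\gamma^3 k},$$
which is exactly $\log c_2$. Lemma \ref{lemma:chernoff} then gives $\Pr(\|X\|_2^2 \ge t^2) \le c_2 e^{-t^2}$, which is the claim.

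The only mildly delicate step is the case split on whether $p_i$ is clipped to $1$; everything else is a mechanical estimation using $\|x\|_2 \le 1$ and $\|x\|_1 \le \sqrt{k}$, together with $s = \gamma k$. The main obstacle conceptually is recognizing that although $X_i$ itself can be very large when $p_i$ is tiny (so $\|X\|_2$ has no obvious Lipschitz/bounded-difference structure), the product structure $\mu_i b_i = x_i^4/p_i^3$ that enters the Chernoff exponent is tame precisely because the importance sampling probabilities $p_i \propto |x_i|$ on the unsaturated coordinates cancel most of the blow-up.
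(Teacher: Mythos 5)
Your proof is correct and follows essentially the same route as the paper: square the norm, apply the Chernoff-type Lemma \ref{lemma:chernoff} to the independent summands $X_i^2 \in [0, x_i^2/p_i^2]$ with means $x_i^2/p_i$, and bound the exponent by splitting coordinates according to whether $p_i$ is clipped at $1$, using $\|x\|_1 \le \sqrt{k}$, $\|x\|_2 \le 1$, and $s = \gamma k$. The resulting bounds $\sum_i x_i^2/p_i \le 1 + 1/\gamma$ and $\sum_i x_i^4/p_i^3 \le 1 + 1/(\gamma^3 k)$ match the paper's exactly, yielding the same constant $c_2$.
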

	
	\begin{proof}
		Squaring on both sides, equivalently we need to upper bound the probability that $\sum_i X^2_i = \|X\|_2^2 \ge t^2$. Notice that the random variable $X^2_i$ is in the interval $[0, x_i^2/p_i^2]$, and its expectation is $\E X^2_i = \frac{x^2_i}{p_i}$. 
Thus, applying Lemma \ref{lemma:chernoff} to $(X^2_i)_i$ we obtain
		\begin{align}
			\Pr\left(\|X\|_2 \ge t \right) =  \Pr\left(\sum_i X^2_i \ge t^2 \right) \le e^{\sum_i \frac{x^2_i}{p_i} + (e-2) \sum_i \frac{x^4_i}{p^3_i} }. \label{eq:l2}
		\end{align}
		
	Using the fact that $\|x\|_1 \le \sqrt{k}$ and $\|x\|_2 \le 1$, we can upper bound the first sum in the exponent by 
	\begin{align*}
	  \sum_{i = 1}^n \frac{x_i^2}{p_i} = \sum_{i: p_i = \frac{s|x_i|}{\|x\|_1}} \frac{x_i^2}{p_i} + \sum_{i: p_i = 1} \frac{x_i^2}{p_i} \le s \|x\|_1^2 + \|x\|_2^2 \le \frac{k}{s} + 1 = \frac{1}{\gamma} + 1,
	\end{align*}
	where the last inequality uses the definition $s = \gamma \cdot k$. The other summation can be upper bounded similarly as 
	\begin{align*}
	  & \sum_{i = 1}^n \frac{x_i^4}{p_i^3} = \sum_{i: p_i = \frac{s|x_i|}{\|x\|_1}} \frac{x_i^4}{p_i^3} + \sum_{i: p_i = 1} \frac{x_i^4}{p_i^3} \leq \frac{1}{\gamma^3 k} + 1.	\end{align*} 
	 Plugging these bounds on inequality \eqref{eq:l2} concludes the proof. 
\end{proof}	

As a consequence, we have the following estimate for the expected value on the tail of $\|X\|_2$.

\begin{corollary} \label{cor:intl2}
	For any $t \ge 0$, we have $$\sum_{w \ge t} w \,\Pr\left(\|X\|_2 = w\right) \le c_2 \left(t + \frac{1}{2 t}\right) \,e^{-t^2}.$$ 
\end{corollary}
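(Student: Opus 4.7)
My plan is to follow exactly the same template used for Corollary \ref{cor:intl0}, replacing the discrete tail bound on $\|X\|_0$ by the Gaussian-type tail bound on $\|X\|_2$ from Lemma \ref{lemma:l2}, and using Lemma \ref{lemma:gauss} to handle the resulting Gaussian integral.

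First, I would observe that the left-hand side $\sum_{w \ge t} w\,\Pr(\|X\|_2 = w)$ is just the expression $\E[\|X\|_2 \cdot \mathbf{1}_{\|X\|_2 \ge t}] = \E[\|X\|_2 \mid \|X\|_2 \ge t] \cdot \Pr(\|X\|_2 \ge t)$ (the ``sum'' notation being a slight abuse, since $\|X\|_2$ is not purely discrete; it is understood as integration against the law of $\|X\|_2$). Then apply the Layer-cake Decomposition (Lemma \ref{lemma:layer}) to $Z = \|X\|_2$ at the threshold $t$ to rewrite this as
\[
  t \cdot \Pr(\|X\|_2 \ge t) + \int_t^{\infty} \Pr(\|X\|_2 \ge \alpha)\, d\alpha.
\]

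Next, I would plug in the tail bound $\Pr(\|X\|_2 \ge \alpha) \le c_2 e^{-\alpha^2}$ from Lemma \ref{lemma:l2} into both terms. The first term is immediately bounded by $c_2 \, t\, e^{-t^2}$. For the second term, I would invoke the Gaussian tail estimate from Lemma \ref{lemma:gauss}, which gives $\int_t^\infty e^{-\alpha^2} d\alpha \le \frac{e^{-t^2}}{2t}$, so the second term is at most $c_2 \cdot \frac{e^{-t^2}}{2t}$.

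Summing these two contributions yields exactly $c_2\bigl(t + \tfrac{1}{2t}\bigr)e^{-t^2}$, which is the desired bound. I do not anticipate a substantive obstacle; everything is a direct concatenation of the lemmas already established in Section \ref{sec:pre} and Section \ref{sec:l2}. The only minor care is in the $t=0$ edge case (where the $\tfrac{1}{2t}$ term is vacuous/infinite and the inequality is trivial), and in noting that Lemma \ref{lemma:layer} applies to the continuous law of $\|X\|_2$ just as it does to a discrete law, so the same derivation structure as in Corollary \ref{cor:intl0} goes through verbatim.
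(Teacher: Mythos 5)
Your proposal is correct and matches the paper's own proof essentially verbatim: the paper also rewrites the sum via the Layer-cake Decomposition (Lemma \ref{lemma:layer}), bounds both resulting terms with the tail estimate of Lemma \ref{lemma:l2}, and controls the Gaussian integral with Lemma \ref{lemma:gauss} to obtain $c_2\bigl(t + \tfrac{1}{2t}\bigr)e^{-t^2}$. No gaps.
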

	
\begin{proof}
	Employing the Layer Cake Lemma and Lemma \ref{lemma:l2} above we have
		\begin{align*}
		\sum_{w \geq t} w \, \text{Pr}(\|X\|_2 = w) & = \E\left[\|X\|_2 \mid \|X\|_2 \geq t\right] \cdot \Pr\left(\|X\|_2 \geq t\right) \\
			& = t \cdot \Pr\left(\|X\|_2 \geq t\right) + \int_{t} ^{\infty} \Pr\left(\|X\|_2 \geq \alpha\right) ~d\alpha \\
			& \leq c_2 \left(t\,e^{-t^2} + \int_{t} ^{\infty} e^{- \alpha^2} d\alpha \right)\\
		& \le  c_2 \frac{2t^2 + 1}{2 t}\,e^{-t^2} &\textrm{(Lemma \ref{lemma:gauss})},
		\end{align*}
		which concludes the proof of the corollary.
\end{proof}
	

\subsection{Controlling the Objective Value}\label{sec:obj}

As mentioned in the introduction, since $\phi$ is convex, Jensen's inequality gives $\E (\phi(X)) \ge \phi(\E X) = \phi(x) = \opto$, which is at least $\OPT$ (thus, by positive homogeneity $\E \phi(\frac{X}{g}) \ge \frac{\OPT}{g}$). We break up this expectation in the cases where the scaled solution $\frac{X}{g}$ is feasible or not for the~\ref{eq:SPCA}:
	\begin{align}
		\opto \leq \E (\phi(X)) =&~ \E \bigg[\phi(X) ~\bigg|~ \|X\|_0 \le k, \|X\|_2 \le g \bigg]\Pr(\|X\|_0 \le k, \|X\|_2 \le g ) \nonumber \\
		&+ \E \bigg[\phi(X) ~\bigg|~ \|X\|_0 \ge k + 1 \textrm{ or } \|X\|_2 > g \bigg] \Pr(\|X\|_0 \ge k + 1 \textrm{ or } \|X\|_2 > g). \label{eq:wrap}
	\end{align}

	%
	
	In the next lemma we upper bound the contribution of the second term in the right-hand side, i.e., the contribution to the value by infeasible scenarios.  
	
	
	
\begin{lemma}\label{lemma:truncation} If $k \ge 10$ and $g > 1$, we have 
	\begin{align*}
 \E \bigg[\phi(X) ~\bigg|~ \|X\|_0 \ge k + 1 \textrm{ or } \|X\|_2 > g \bigg] \cdot \Pr\bigg(\|X\|_0 \ge k + 1 \textrm{ or } \|X\|_2 > g\bigg) \le \alpha \OPT,
	\end{align*}
		 where $\alpha = \frac{3}{2 \sqrt{k}} \cdot c_2 \eta(k + 1) + \frac{\sqrt{2}}{\sqrt{k}} \cdot e^{-(1-c_1) k - 1} (k + 2)  + c_2 \cdot (g + \frac{1}{2g}) e^{- g^2}$.
	\end{lemma}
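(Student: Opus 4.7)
The plan is to start from the pointwise estimate $\phi(X)\le \|X\|_2\sqrt{\lceil \|X\|_0/k\rceil}\,\OPT$ supplied by Lemma~\ref{lem:rhs} (applied with $t=\|X\|_0$ and $w=\|X\|_2$), and to upper bound the expectation of this right-hand side restricted to the infeasibility event. The main obstacle is that $\|X\|_0$ and $\|X\|_2$ are correlated---both are monotone functions of the Bernoulli vector $\varepsilon$---so one cannot simply multiply the tail bounds from Sections~\ref{sec:l0} and~\ref{sec:l2}. The strategy will be to perform two successive splits of the event so that in each resulting sub-region only one of the two norms needs to be controlled.

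First, I decompose the infeasibility indicator into the disjoint union $\mathbf{1}_{\mathrm{infeas}} = \mathbf{1}_{\|X\|_0\ge k+1} + \mathbf{1}_{\|X\|_0\le k,\ \|X\|_2>g}$. On the second indicator we have $\sqrt{\lceil\|X\|_0/k\rceil}=1$ (the case $\|X\|_0=0$ contributes nothing since $\phi(\mathbf{0})=0$), so its contribution is at most $\OPT\cdot \E[\|X\|_2\,\mathbf{1}_{\|X\|_2>g}]$, which Corollary~\ref{cor:intl2} bounds by $\OPT\cdot c_2(g+\tfrac{1}{2g})e^{-g^2}$---this produces the third term of $\alpha$.

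For the indicator $\mathbf{1}_{\|X\|_0\ge k+1}$ I would use $\sqrt{\lceil\|X\|_0/k\rceil}\le\sqrt{2\|X\|_0/k}$ (valid since $\|X\|_0\ge k$) and split \emph{again} according to whether $\|X\|_2\le\sqrt{\|X\|_0}$ or $\|X\|_2>\sqrt{\|X\|_0}$. On the first sub-region the elementary inequality $\|X\|_2\sqrt{\|X\|_0}\le \|X\|_0$ reduces the contribution to $\sqrt{2/k}\cdot\OPT\cdot \E[\|X\|_0\,\mathbf{1}_{\|X\|_0\ge k+1}]$, which Corollary~\ref{cor:intl0} (taken at $y=k+1$) collapses into the second term of $\alpha$. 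On the second sub-region I would write the expectation as a sum over the integer values $t=\|X\|_0\ge k+1$, upper bound $\mathbf{1}_{\|X\|_0=t,\,\|X\|_2>\sqrt{t}}$ by the cruder $\mathbf{1}_{\|X\|_2>\sqrt{t}}$, and then apply Lemma~\ref{lemma:l2} together with Lemma~\ref{lemma:gauss} (exactly as in the proof of Corollary~\ref{cor:intl2}) to obtain $\E[\|X\|_2\mathbf{1}_{\|X\|_2>\sqrt{t}}]\le c_2 e^{-t}\bigl(\sqrt{t}+\tfrac{1}{2\sqrt{t}}\bigr)$. Multiplying by the remaining $\sqrt{t}$ coming from $\|X\|_2\sqrt{\|X\|_0}$ produces a $\sum_{t\ge k+1}t\,e^{-t}$-type sum, which Lemma~\ref{lem:sum} closes into the first term of $\alpha$; the hypothesis $k\ge 10$ is what allows the lower-order $1/(2\sqrt{t})$ correction to be absorbed into the leading $\sqrt{t}$ with a clean constant.

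The hard part will be this last sub-case $\|X\|_2>\sqrt{\|X\|_0}$, the only regime in which both concentration inequalities must act jointly on the correlated pair $(\|X\|_0,\|X\|_2)$. The reason the plan works is that the cutoff $\|X\|_2=\sqrt{\|X\|_0}$ is chosen precisely so that the Gaussian-type $\ell_2$-tail at level $\sqrt{t}$, namely $c_2 e^{-t}$, matches the exponential form $\sum_t te^{-t}$ appearing in Lemma~\ref{lem:sum}; replacing $\mathbf{1}_{\|X\|_0=t}$ by $1$ inside the sum loses only a constant, so the union-bound style decoupling of $\|X\|_0$ and $\|X\|_2$ is still sharp enough to close into the stated $\alpha$.
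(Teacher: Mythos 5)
Your proposal is correct and follows essentially the same route as the paper: the same three-way partition of the infeasibility event (large $\|X\|_0$ split at the threshold $\|X\|_2 = \sqrt{\|X\|_0}$, plus the region $\|X\|_0 \le k$, $\|X\|_2 > g$), with Lemma~\ref{lem:rhs} providing the pointwise bound and Corollaries~\ref{cor:intl0} and~\ref{cor:intl2} together with Lemma~\ref{lem:sum} closing each piece into the corresponding term of $\alpha$. The only difference is notational (indicator functions versus the paper's $f(t,w)$, $p(t,w$) bookkeeping), and your account of where $k \ge 10$ enters matches the paper's use of $t \ge k+1 \ge 11$ to absorb the $1/(2\sqrt{t})$ correction.
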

	
	\begin{proof}
We first simplify the notation and define 
	\begin{align*}
		f(t,w) &= \E\big[\phi(X) \mid \|X\|_0 = t, \|X\|_2 = w\big]\\
		p(t,w) &= \Pr\big(\|X\|_0 = t, \|X\|_2 = w\big)\\
		p(t) &= \Pr\big(\|X\|_0 = t\big).
	\end{align*} 
	Thus, we can write the left-hand side of the lemma as 	
	\begin{align*}
		\E \big[\phi(X)~\big|~ \|X\|_0 \ge k + 1 \textrm{ or } \|X\|_2 > g \big] \cdot \Pr\big(\|X\|_0 \ge k + 1 \textrm{ or } \|X\|_2 > g \big) \\
		= \sum_{(t,w)\, :\, t \ge k + 1 \textrm{ or } w > g} f(t,w)\, p(t,w). 
	\end{align*}
	Since $X$ only takes finitely many different values, notice that the sum in the right-hand side has finitely many non-zero terms. To control this sum, we are going to use Lemma \ref{lem:rhs} to upper bound $f(t,w)$, and concentration of $\|X\|_0$ and $\|X\|_2$ (Lemmas \ref{lemma:l0} and \ref{lemma:l2} respectively) to upper bound $p(t,w)$. However, concentration of $\|X\|_0$ is only helpful to control the terms with large $t$, and concentration of $\|X\|_2$ to control the terms with large $w$. To be able to effectively cover all terms, we need a careful partition of the sum (see Figure \ref{fig:1}):
	\begin{align}
		&\sum_{(t,w)\, :\, t \ge k + 1 \textrm{ or } w > g} f(t,w)\, p(t,w) \leq \sum_{(t,w)\, :\, t \ge k + 1 \textrm{ or } w \geq g} f(t,w)\, p(t,w) \notag\\
		&~~~~~~~~~~~~~\le  \underbrace{\sum_{t \geq k + 1} \sum_{w \geq \sqrt{t}} f(t,w)\,p(t,w)}_{\text{Sum A.1}} + \underbrace{\sum_{t \geq k  + 1} \sum_{w \leq \sqrt{t}} f(t,w) \cdot p(t,w)}_{\text{Sum A.2}} + \underbrace{\sum_{t \le k} \sum_{w \geq g} f(t,w)\,p(t,w)}_{\text{Sum B}} \label{eq:big}
	\end{align}	
	
	\begin{figure}[h]
	\begin{center}
		\includegraphics[width=0.5\textwidth]{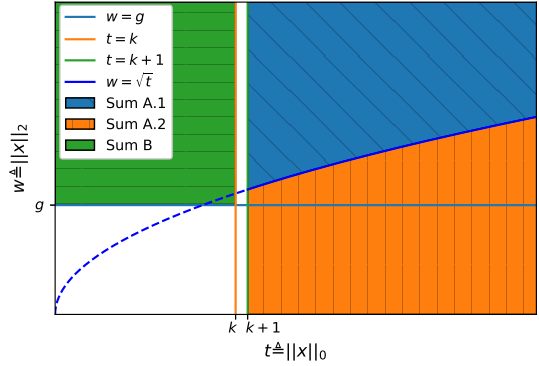}
		\end{center}
		\caption{Visual representation of the various sums}
		\label{fig:1}
	\end{figure}
	
	We upper bound each of these sums separately. 
		

	\paragraph{Sum A.1:} We upper bound this term by $\lesssim \eta(k) \OPT  \lesssim e^{-k} \OPT$.
	
	From Lemma \ref{lem:rhs} we have that for $t \ge k$ 
	\begin{align}
		f(t,w) \le w \sqrt{ \left\lceil \frac{t}{k} \right\rceil}\, \OPT \le w \sqrt{ \frac{2t}{k}}\, \OPT, \label{eq:sumA10}
	\end{align}
	and also $p(t,w) \le \Pr(\|X\|_2 = w)$. Thus, fixing $t$ and adding over $w \ge \sqrt{t}$ we get
		\begin{align}
			\sum_{w \geq \sqrt{t}} f(t,w)\,p(t,w) & \leq \OPT \,\sqrt{\frac{2t}{k}}\,\left(\sum_{w \geq \sqrt{t}} w \, \text{Pr}(\|X\|_2 = w)\right). \label{eq:sumA11}
		\end{align}
	Using Corollary \ref{cor:intl2} and the fact $t \ge k + 1 \ge 11$, the sum inside the bracket on the right-hand side of (\ref{eq:sumA11}) is at most $c_2 1.05 \sqrt{t}\,e^{-t}$.
	Employing this bound on inequality \eqref{eq:sumA11} and adding over all $t \ge k + 1$ we obtain 
		\begin{align*}
			\textbf{Sum A.1} & \leq \left( c_2 \frac{\sqrt{2}\cdot  1.05 }{\sqrt{k}} \cdot \sum_{t \geq k + 1} t e^{- t} \right) \OPT \leq c_2 \frac{3}{2 \sqrt{k}} \eta(k + 1) \OPT,
		\end{align*}
		where the final inequality follows from Lemma \ref{lem:sum}.
		

	\paragraph{Sum A.2:} For $w \le \sqrt{t}$, and using Lemma \ref{lem:rhs} we obtain $$f(t,w) \le  t \sqrt{ \frac{2}{k}}\, \OPT,$$ and thus the sum A.2 can be upper bounded
		\begin{align*}
		\sum_{t \geq k + 1} \sum_{w \le \sqrt{t}}	f(t,w) \, p(t,w) &\le \sqrt{\frac{2}{k}} \OPT \sum_{t \geq k + 1} t \sum_{w \le \sqrt{t}} p(t,w) \notag\\
			& \le \sqrt{\frac{2}{k}} \OPT \sum_{t \geq k + 1} t  \Pr\left(\|X\|_0 = t\right) \\
			& \le \sqrt{\frac{2}{k}} \OPT e^{c_1 k - k - 1} (k + 2) & \textrm{(Corollary \ref{cor:intl0})}.
		\end{align*}
		

	\paragraph{Sum B:} For $t \le k$, Lemma \ref{lem:rhs} gives that $f(t,w) \le w \OPT$, and thus sum B can be upper bounded as 
		\begin{align*}
			 \sum_{t \leq k} \sum_{w \geq g} f(t,w)\, p(t,w) &\leq  \sum_{t \leq k} \sum_{w \geq g} w \OPT \cdot p(t,w) \nonumber \\
			& \leq  \OPT  \sum_{w \geq g} w \Pr\left(\|X\|_2 = w\right) \\
            & \leq c_2 \left(g + \frac{1}{2g}\right) e^{-g^2} \OPT & (\textrm{Corollary \ref{cor:intl2}}). 
		\end{align*}
	\medskip
	Employing these bounds on inequality \eqref{eq:big} concludes the proof of the lemma. 
	\end{proof}


	\subsection{Conclusion of the Proof of Theorem \ref{thm:main}}
 Taking a union bound, the probability that the $\frac{X}{g}$ is feasible is at least $$1 - \Pr(\|X\|_0 \geq k + 1) - \Pr(\|X\|_2 \ge g).$$ One can verify that with the setting $\gamma = 0.44$ and $g = 2.69$, Lemma \ref{lemma:l0} and \ref{lemma:l2} imply that this quantity is strictly positive. 

	Moreover, combining equation \eqref{eq:wrap} and Lemma \ref{lemma:truncation}, and using the fact that $\frac{X}{g}$ is feasible with non-zero probability, we have:

	\begin{gather*}
		\E \bigg[\phi\bigg(\frac{X}{g}\bigg) ~\bigg|~ \bigg\|\frac{X}{g}\bigg\|_0 \leq k, \bigg\|\frac{X}{g}\bigg\|_2 \leq 1 \bigg]\Pr\left(\bigg\|\frac{X}{g}\bigg\|_0 \leq k, \bigg\|\frac{X}{g}\bigg\|_2 \leq 1 \right)  \ge \frac{\opto - \alpha \OPT}{g}\\
		\Rightarrow \E \bigg[\phi\bigg (\frac{X}{g}\bigg) ~\bigg|~ \bigg\|\frac{X}{g}\bigg\|_0 \leq k, \bigg\|\frac{X}{g}\bigg\|_2 \leq 1 \bigg] \ge \frac{\opto - \alpha \OPT}{g}.
	\end{gather*}
	Therefore, there \textbf{exists} a scenario among the ones where $\frac{X}{g}$ is feasible where $\phi(\frac{X}{g}) \ge \frac{\opto - \alpha \OPT}{g}$. Since $\OPT \ge \phi(\frac{X}{g}) \ge \frac{\opto - \alpha \OPT}{g}$ implies $(g + \alpha) \OPT \ge \opto$. Verifying that with our setting of $g = 2.69, \gamma = 0.44, k = 15$, we have $\OPT \geq \frac{1}{2.95} \opto$ concludes the proof of the first part of the theorem. 
	
To prove the second part of the theorem, similar to the above, if the probability that the $\frac{X}{g}$ is feasible is positive, then we have that $\E[\phi(\frac{X}{g})| \frac{X}{g} \textup{ is feasible}] \ge \frac{\opto - \alpha \OPT}{g}$. Thus if the probability that the $\frac{X}{g}$ is feasible is positive, we obtain that with positive probability, $\frac{X}{g}$ is both feasible and satisfies $\phi(\frac{X}{g}) \geq \frac{1 - \alpha}{g} \opto$ (the last inequality follows from $\optz \leq \opto$). Setting $g = 2.996$, $\gamma = 0.4051$ for $k = 15$, we have $\phi(\frac{X}{g}) \ge \frac{1}{3.25} \opto$ which concludes the proof of the second part of the theorem. 

\section{Proof of Theorem~\ref{thm:lower}}\label{sec:lower}
We begin with a simple observation.  

\begin{observation}
Suppose $A = (x^*)(x^*)^T$ where $x^* \in \mathbb{R}^{n}_{+}$, $\|x^*\|_2 = 1$, and $\|x^*\|_1 \leq \sqrt{k}$, and consider the problems, \ref{eq:SPCA} and \ref{eq:SPCA1} with objective function $\phi = \|.\|_A$. Then we have $\opto = 1$.  Moreover, if the coordinates of $x^*$ are sorted in non-increasing order, then $\optz = \sum_{i = 1}^k x_i^2$.
\end{observation}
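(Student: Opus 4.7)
The plan is to exploit the rank-one structure of $A$ to reduce the semi-norm $\phi$ to an absolute linear functional, after which both optimization problems admit closed-form solutions. Writing $A = x^*(x^*)^\top$ gives $x^\top A x = \langle x^*, x\rangle^2$, so $\phi(x) = \|x\|_A = |\langle x^*, x\rangle|$. Both \ref{eq:SPCA} and \ref{eq:SPCA1} then become maximizations of this absolute inner product under their respective constraints, and can be attacked by Cauchy--Schwarz.

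For the $\ell_1$-relaxation value, I would first check that $x = x^*$ itself is feasible: by hypothesis $\|x^*\|_2 = 1$ and $\|x^*\|_1 \le \sqrt{k}$. This yields $\phi(x^*) = \langle x^*, x^*\rangle = 1$, hence $\opto \ge 1$. For the matching upper bound, Cauchy--Schwarz together with the feasibility constraint $\|x\|_2 \le 1$ and $\|x^*\|_2 = 1$ gives $|\langle x^*, x\rangle| \le 1$ for every feasible $x$. Combining these, $\opto = 1$.

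For the $\ell_0$-constrained value, I would first fix a candidate support $S \subseteq [n]$ with $|S|\le k$, and maximize $|\langle x^*_S, x_S\rangle|$ subject to $\|x_S\|_2 \le 1$. By Cauchy--Schwarz this equals $\|x^*_S\|_2 = \sqrt{\sum_{i\in S}(x^*_i)^2}$, attained by the normalized choice $x_S = x^*_S/\|x^*_S\|_2$. It then remains to maximize $\sum_{i\in S}(x^*_i)^2$ over $k$-subsets $S$; because the coordinates of $x^*$ are sorted in non-increasing order (and nonnegative), the optimal support is $S = \{1,\dots,k\}$. This yields the claimed closed form for $\optz$ (up to the square root implicit in the semi-norm definition $\|\cdot\|_A = \sqrt{(\cdot)^\top A (\cdot)}$).

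There is no real obstacle here: the only substantive points are recognizing that the rank-one structure collapses $\phi$ to an absolute linear functional, verifying the tightness of Cauchy--Schwarz via the explicit candidate optimizers in each case, and invoking the sorted order to pick the best $k$-subset. Everything is a short verification once the initial reduction is made.
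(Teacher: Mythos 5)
Your proposal is correct, and it is the natural (essentially the only) argument: the paper states this Observation without proof, so there is nothing to compare against, but the reduction of $\|\cdot\|_A$ to $|\langle x^*,x\rangle|$ via the rank-one structure, the Cauchy--Schwarz upper bounds with explicit tight candidates, and the top-$k$ support selection under the sorted-nonnegative hypothesis are exactly what the authors leave implicit. Your parenthetical about the square root is a legitimate catch rather than a gap in your argument: with the paper's definition $\phi=\|\cdot\|_A=\sqrt{x^\top A x}$, your derivation gives $\optz=\bigl(\sum_{i=1}^k (x^*_i)^2\bigr)^{1/2}$, so the Observation as literally stated (and the subsequent use of $1/\theta$ rather than $1/\sqrt{\theta}$ in the lower-bound computation) implicitly treats the objective as $x^\top A x$ without the square root.
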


Therefore, in order to find instances where the ratio $\frac{\opto}{\optz}$ is large, we can solve the following optimization problem:
\begin{eqnarray}
\begin{array}{rcl}
\textup{min}_x & \sum_{i = 1}^k x_i^2 & \\
\textup{s.t.} & \|x\|_2& = 1 \\
& \sum_{i = 1}^n x_i & \leq \sqrt{k} \\
& - x_i + x_{i + 1} & \leq 0, ~ i = 1, \ldots, n - 1 \\
& - x_n & \leq 0,
\end{array} \label{eq:bound-SPCA}
\end{eqnarray}
We show that this optimization problem can be reduced to a four variable optimization problem. In order to do so, note that the above problem is equivalent to the following problem:
\begin{eqnarray}
\begin{array}{rcl}
\textup{min}_{x,a,G,H,C,D} & G & \\
\textup{s.t.} & \sum_{i = 1}^k x_i^2 & = G \\
& \sum_{i = k + 1}^n x_i^2 & = H \\
& G + H & = 1 \\
& \sum_{i = 1}^k x_i & = C \\
& \sum_{i = k + 1}^n x_i & = D \\
& C + D & \leq \sqrt{k} \\
& x_1, \ldots, x_k & \geq a \\
& x_{k + 1}, \ldots, x_n & \leq a \\
& x_{k + 1}, \ldots, x_n & \geq 0.
\end{array} \label{eq:bound-SPCA1}
\end{eqnarray}

In order to solve this problem we first determine some bounds on the new variables $a, G, H, C, D$.

\begin{proposition}\label{prop:bds} 
	Let $x, a, G, H, C, D$ be a feasible solution for \eqref{eq:bound-SPCA1}. Then:
\begin{enumerate}
\item $a \geq 0$
\item $ka \le C \le \sqrt{k}$
\item $0 \le D \le \sqrt{k} - C$
\item $\underbrace{\frac{D^2}{n - k}}_{H_{\text{Lower}}} \le H \le \underbrace{\left\lfloor \frac{D}{a} \right\rfloor a^2 + \left( D - \left\lfloor \frac{D}{a} \right\rfloor a \right)^2 }_{H_{\text{Upper}}}$, assuming $n - k\geq \lfloor \frac{D}{a} \rfloor + 1$
\item $\underbrace{\frac{C^2}{k}}_{G_{\text{Lower}}} \le G \le \underbrace{(C - (k - 1)a)^2 + (k - 1) a^2 }_{G_{\text{Upper}}}$
\end{enumerate}
\end{proposition}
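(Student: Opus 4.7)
My plan is to verify the five items in turn; each is a direct consequence of the feasibility constraints of \eqref{eq:bound-SPCA1} combined with one of two standard tools, namely the Cauchy-Schwarz inequality (for the lower bounds on $G$ and $H$) and the principle that a convex function maximized over a polytope attains its maximum at a vertex (for the upper bounds on $G$ and $H$). Items 1, 2, 3 are essentially bookkeeping. For item 1, assuming $n-k\ge 1$ (otherwise the problem is degenerate), the constraints $0\le x_{k+1}\le a$ give $a\ge 0$. For item 2, $C\ge ka$ follows by summing the inequalities $x_i\ge a$ over $i=1,\dots,k$, and $C\le\sqrt{k}$ follows from $C+D\le\sqrt{k}$ combined with the non-negativity of $D$ (established in item 3 without circularity). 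For item 3, $D\ge 0$ is an immediate sum of the constraints $x_i\ge 0$ for $i=k+1,\dots,n$, and $D\le \sqrt{k}-C$ is a rearrangement of $C+D\le\sqrt{k}$.

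For item 4, the lower bound on $H$ is Cauchy-Schwarz applied to $D=\sum_{i=k+1}^n x_i\cdot 1$, giving $D^2\le (n-k)\sum_{i=k+1}^n x_i^2=(n-k)H$. For the upper bound, I would consider the polytope $P=\{y\in\mathbb{R}^{n-k}:\sum_i y_i=D,\ 0\le y_i\le a\}$; the tail $(x_{k+1},\dots,x_n)$ of any feasible solution lies in $P$, and since $y\mapsto\|y\|_2^2$ is convex its maximum on $P$ is attained at a vertex. Any vertex of $P$ has at most one coordinate strictly between the box bounds $0$ and $a$, so it must consist of $\lfloor D/a\rfloor$ coordinates equal to $a$, one equal to $D-\lfloor D/a\rfloor a$, and the rest equal to $0$; the existence of such a vertex in $P$ uses the hypothesis $n-k\ge \lfloor D/a\rfloor +1$. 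Evaluating $\sum y_i^2$ at this vertex gives exactly $H_{\text{Upper}}$.

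For item 5, the lower bound $G\ge C^2/k$ is again Cauchy-Schwarz. For the upper bound, the head $(x_1,\dots,x_k)$ of any feasible solution lies in the simplex $Q=\{z\in\mathbb{R}^k:\sum_i z_i=C,\ z_i\ge a\}$, which is non-empty precisely because $C\ge ka$ by item 2. The convex function $\sum z_i^2$ attains its max on $Q$ at a vertex, and every vertex of $Q$ has $k-1$ of the constraints $z_i\ge a$ active, hence (up to relabeling) equals $(C-(k-1)a,\,a,\dots,a)$; evaluating gives $G_{\text{Upper}}=(C-(k-1)a)^2+(k-1)a^2$.

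The only nontrivial step is the vertex characterization used for the upper bounds on $H$ and $G$: in the case of $P$ one must be careful that the candidate vertex actually fits in $P$, which is exactly what the side condition $n-k\ge \lfloor D/a\rfloor +1$ ensures, while for $Q$ one needs $a\ge 0$ (item 1) to rule out degeneracies. Everything else reduces to summing constraints or invoking Cauchy-Schwarz.
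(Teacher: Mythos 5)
Your proof is correct and follows essentially the same route as the paper: items 1--3 by summing constraints, the lower bounds on $G$ and $H$ by the uniform/Cauchy--Schwarz argument, and the upper bounds by maximizing $\sum x_i^2$ over the head and tail subject to the fixed $\ell_1$-sums and the bounds involving $a$. In fact your vertex characterization of the extremal points of the polytopes $P$ and $Q$ supplies a justification that the paper's proof only asserts, and correctly identifies where the side condition $n-k\ge\lfloor D/a\rfloor+1$ and the bound $C\ge ka$ are needed.
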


\begin{proof} Items 1 through 3 follow directly from the constraints in \eqref{eq:bound-SPCA1}.


	\smallskip
	\noindent \emph{Item 4.} The upper bound comes from maximizing $\sum_{i = k +1}^n x_i^2$ subject to the condition $\sum_{i = k +1}^n x_i = D, \ x_i \in [0, \ a]\textup{ for all }  i \in \{k +1, \dots, n\}$ (assuming $n - k\geq \lfloor \frac{D}{a} \rfloor + 1$). The lower bound on $H$ is obtained by minimizing $\sum_{i = k +1}^n x_i^2$ subject to the condition $\sum_{i = k +1}^n x_i = D$. Note that the optimal solution is setting $x_i = \frac{D}{n -k}$ for all $x_i$, and under the assumption of $n - k\geq \lfloor \frac{D}{a} \rfloor + 1$ each of these $x_i$'s is less than of equal to $a$.

	\smallskip
	\noindent \emph{Item 5.}  The upper bound comes from maximizing $\sum_{i = 1}^k x_i^2$ subject to the condition $\sum_{i = 1}^k x_i = C, \ x_i \geq a \textup{ for all }i \in [k]$. The lower bound on $G$ is obtained by minimizing $\sum_{i = 1}^k x_i^2$ subject to the condition $\sum_{i = k +1}^n x_i = C$.
\end{proof}

\begin{proposition}\label{prop:fndsol}
Suppose there exists $a$, $C$, $D$ satisfying (1), (2), (3) of Proposition \ref{prop:bds} such that $G_{\text{Upper}} + H_{\text{Upper}}\geq 1$. Let $G^* = \textup{max} \{ G_{\text{Lower}}, 1 - H_{\text{Upper}}\}$. Then exists a vector $x \in \mathbb{R}^n_{+}$ satisfying the feasible region of (\ref{eq:bound-SPCA1}) with objective function value equal to $G^*$.
\end{proposition}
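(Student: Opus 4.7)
The plan is to construct an explicit $x \in \mathbb{R}^n_+$ feasible for \eqref{eq:bound-SPCA1} with objective value exactly $G^*$, built in two independent blocks: a ``large'' block $(x_1,\ldots,x_k)$ with every coordinate $\ge a$ summing to the prescribed $C$ with sum of squares $G^*$, and a ``small'' block $(x_{k+1},\ldots,x_n)$ with coordinates in $[0,a]$ summing to $D$ with sum of squares $H^* := 1 - G^*$. Concatenating the blocks yields $\|x\|_2^2 = G^* + H^* = 1$ and $\sum_i x_i = C + D \le \sqrt{k}$; since the large-block entries are all $\ge a$ while the small-block entries are all $\le a$, sorting within each block in non-increasing order gives the monotonicity $x_1 \ge \cdots \ge x_n \ge 0$ required in the outer problem.

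Before constructing the blocks, I would check that the two targets lie in their attainable ranges. The definition $G^* = \max\{G_{\text{Lower}}, 1 - H_{\text{Upper}}\}$ together with the hypothesis $G_{\text{Upper}} + H_{\text{Upper}} \ge 1$ immediately gives $G_{\text{Lower}} \le G^* \le G_{\text{Upper}}$ and $H^* = 1 - G^* \le H_{\text{Upper}}$. The only nontrivial feasibility condition is $H^* \ge H_{\text{Lower}}$, which is automatic in the case $G^* = 1 - H_{\text{Upper}}$; in the other case $G^* = G_{\text{Lower}}$ it reduces to $C^2/k + D^2/(n-k) \le 1$, which I expect to verify directly from $C + D \le \sqrt{k}$ (observing that in the relevant regime $n - k \ge k$, so $D^2/(n-k) \le D^2/k$ and then $C^2/k + D^2/k \le (C+D)^2/k \le 1$).

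Given feasibility of the targets, each block is realized by a straightforward Intermediate Value Theorem argument along a linear family. For the large block, consider
\[
y^{(\lambda)} := (1-\lambda)\bigl(C/k,\ldots,C/k\bigr) + \lambda\bigl(C-(k-1)a,\,a,\ldots,a\bigr), \qquad \lambda \in [0,1].
\]
Since $C/k \ge a$ by Proposition~\ref{prop:bds}(2), every coordinate of $y^{(\lambda)}$ is $\ge a$; the sum equals $C$ identically; and $\sum_{i=1}^k (y^{(\lambda)}_i)^2$ is continuous in $\lambda$, interpolating between $G_{\text{Lower}}$ at $\lambda = 0$ and $G_{\text{Upper}}$ at $\lambda = 1$, so by IVT some $\lambda^*$ achieves the target $G^*$. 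An entirely analogous deformation for the small block between $(D/(n-k),\ldots,D/(n-k))$ and the staircase vector with $\lfloor D/a \rfloor$ entries equal to $a$, one residual entry $D-\lfloor D/a \rfloor a \in [0,a)$, and the rest zero (legal because $n-k \ge \lfloor D/a \rfloor + 1$, which also ensures $D/(n-k) \le a$), produces a vector with sum of squares $H^*$. The main obstacle I foresee is the bookkeeping for $H^* \ge H_{\text{Lower}}$; once that is settled, concatenating the two blocks yields the required $x$.
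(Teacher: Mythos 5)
Your proposal is correct and follows essentially the same route as the paper: split $x$ into a large block and a small block, realize the target sums of squares $G^*$ and $H^* = 1-G^*$ by continuously interpolating (a convex combination, i.e.\ your IVT argument) between the extremal configurations achieving $G_{\text{Lower}}/G_{\text{Upper}}$ and $H_{\text{Lower}}/H_{\text{Upper}}$, and concatenate. Your explicit verification that $H^* \ge H_{\text{Lower}}$ (via $C^2/k + D^2/(n-k) \le (C+D)^2/k \le 1$ when $n-k \ge k$, which holds in the regime of interest since $n$ may be taken large) is a point the paper leaves implicit, so it is a welcome addition rather than a divergence.
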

\begin{proof}
Note that since $G_{\text{Upper}} + H_{\text{Upper}}\geq 1$, $G^*= \textup{max} \{ G_{\text{Lower}}, 1 - H_{\text{Upper}}\}$ is the smallest value in the interval $[G_{\text{Lower}}, \ G_{\text{Upper}}]$ such that there exists $H^{*} \in [H_{\text{Lower}}, \ H_{\text{Upper}}]$ satisfying $G^* + H^* = 1$.

Via the proof of Proposition~\ref{prop:bds}, there exists a solution $x_{\text{Upper}} \in \mathbb{R}^{n - k}_{+}$ satisfying, $\| x_{\text{Upper}} \|_2 = H_{\text{Upper}}$, $\| x_{\text{Upper}} \|_1 = D$, and $ (x_{\text{Upper}})_i \leq a \ \forall i \in [n - k]$. Similarly, there exists a solution $x_{\text{Lower}} \in \mathbb{R}^{n - k}_{+}$ satisfying, $\| x_{\text{Lower}} \|_2 = H_{\text{Lower}}$, $\| x_{\text{Lower}} \|_1 = D$, and $ (x_{\text{Lower}})_i \leq a \ \forall i \in [n - k]$. Since $\|\cdot\|_2$ is a continuous function there is a convex combination of $x_{\text{Upper}}$ and $x_{\text{Lower}}$, say $y \in \mathbb{R}^{n - k}_{+}$ satisfying $\| y\|_2 = H^*$, $\| y \|_1 = D$, and $ (y)_i \leq a \ \forall i \in [n - k]$.

Now using the same argument for $G$, we can obtain $z \in \mathbb{R}^k_{+}$ such that $\| z\|_2 = G^*$, $\| z \|_1 = C$, and $ (z)_i \geq a \ \forall i \in [n - k]$. Thus, the augmented vector, $(z^{\top} \ y^{\top})^{\top}$ satisfies the feasible region of (\ref{eq:bound-SPCA1}) with objective function value equal to $G^*$.
\end{proof}

As a consequence of Proposition~\ref{prop:fndsol}, the optimization problem (\ref{eq:bound-SPCA}) may be solved by solving the following problem:
\begin{eqnarray}\label{fprob}
\begin{array}{rl}
\textup{min}_{\theta, a, C, D}& \theta \\
\textup{s.t.} & \frac{1}{\sqrt{k}} \geq a \geq 0\\
& \sqrt{k} \geq C \geq ka \\
& \sqrt{k} - C \geq D \geq 0\\
& \left \lfloor \frac{D}{a} \right\rfloor a^2 + \left( D - \left\lfloor \frac{D}{a} \right\rfloor a \right)^2 + (C - (k - 1)a)^2 + (k - 1) a^2 \geq 1\\
&\theta \geq \frac{C^2}{k} \\
&\theta \geq 1 - \left( \left \lfloor \frac{D}{a} \right\rfloor a^2 + \left( D - \left\lfloor \frac{D}{a} \right\rfloor a \right)^2 \right)
\end{array}
\end{eqnarray}

Note in the above problem, we can always set $D = \sqrt{k} - C$. We solved the above problem numerically (obtaining an upper bound to \eqref{eq:bound-SPCA1}), by just discretizing in the space of $a$ and $C$ variables and taking the best feasible point. {The result of our numerical experiments is presented in Figure~\ref{fig:2}, where the $y$-axis is the reciprocal of the optimal objective function value of problem (\ref{fprob}), which is $\opto/\optz$. Notice that $\opto/\optz$ is increasing with increasing values of $k$, but it seems to converge to a value slightly greater than $1.32$. It can be verified that $k = 10000$, $a = 0.005$, $C = 51$, $D = 49$, $\theta = 0.755$ is a feasible solution for (\ref{fprob}), i.e. $\opto/\optz \geq 1.324$.  This completes the proof of Theorem~\ref{thm:lower}.}

	\begin{figure}[h]
	\begin{center}
		\includegraphics[width=0.75\textwidth]{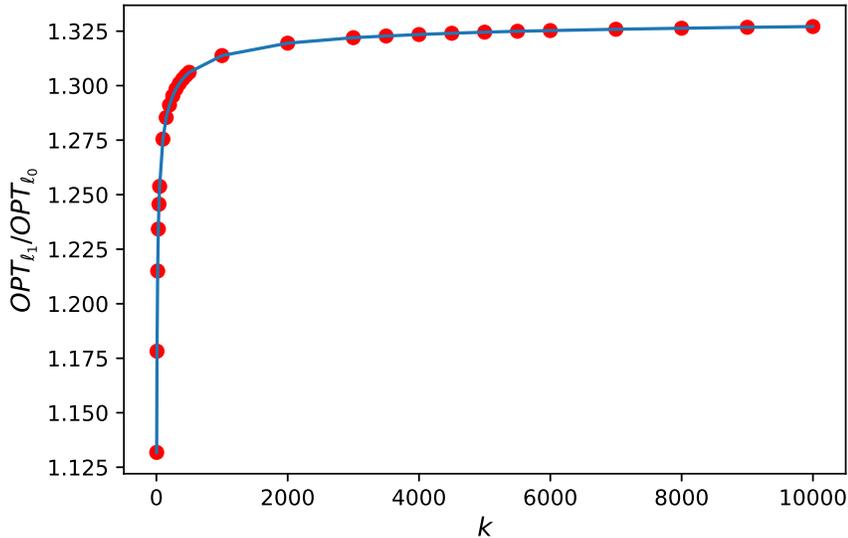}
		\end{center}
		\caption{Result of Problem (\ref{fprob}) for varying values of $k$}
		\label{fig:2}
	\end{figure}

\paragraph{Acknowledgements.} 
Santanu S. Dey would like to acknowledge the support of NSF CMMI grant 1562578.
\bibliographystyle{plain}
\bibliography{ref,rahul_dbm3}   

\appendix

\noindent {\LARGE \textbf{Appendix}}
\bigskip

\section{Proof of Lemma \ref{lemma:chernoff}} \label{app:chernoff}

	Using Markov's inequality and independence we have 
	\begin{align}
\text{Pr}\left( \sum_i X_i \geq t \right) & = \text{Pr} \left( e^{\sum_i X_i} \ge e^t \right ) \le \frac{\E e^{\sum_i X_i}}{e^t} = \frac{\prod_i \E e^{X_i}}{e^t}. \label{eq:chernoff}
\end{align} 

But for $x \in [0,b_i]$ we have $e^x \le 1 + x \frac{e^{b_i} - 1}{b_i}$; furthermore, $e^y \le 1 + y + (e-2) y^2$ for $y \in [0,1]$, so employing this to bound $e^{b_i}$ in the previous inequality we obtain $e^x \le 1 + x (1 + (e-2) b_i)$. Therefore, $$\E e^{X_i} \le 1 + \mu_i (1 + (e-2) b_i) \le e^{\mu_i (1 + (e-2) b_i)},$$ where the last inequality follows from $1 + x \le e^x$ that holds for all $x$. Employing this bound on \eqref{eq:chernoff} concludes the proof of the lemma.

\end{document}